\newcommand{\codim}{{\rm codim}}
\newcommand\sV{{\mathcal V}}
\newcommand\bC{{\mathbb C}}
\newcommand\bQ{{\mathbb Q}}
\newcommand\bN{{\mathbb N}}
\newcommand\can{{\rm can}}
\newcommand\Aut{{\rm Aut}}
\newcommand\bP{{\mathbb P}}
\newcommand\vol{{\rm vol}}
\def\nd{\mathop{\rm nd}\nolimits}
\def\can{\mathop{\rm can}\nolimits}
\def\Det{\mathop{\rm det}\nolimits}
\def\Div{\mathop{\rm div}\nolimits}
\def\Aut{\mathop{\rm Aut}\nolimits}
\let\ep=\varepsilon
\def\bQ{{\mathbb Q}}
\def\bC{{\mathbb C}}
\def\bN{{\mathbb N}}
\def\bP{{\mathbb P}}
\newcounter{lemma}[section]
\renewcommand{\thelemma}{\strut\kern-3pt\arabic{section}.\arabic{lemma}}
\newtheorem{lemma1}[lemma]{\setcounter{equation}{0}}
\let\saveref=\ref
\def\ref#1{\strut\kern3pt{\saveref{#1}}}
\def\eqref#1{({\saveref{#1}})}
\newenvironment{lemma}{\begin{lemma1}{\bf Lemma.}}{\end{lemma1}}
\newenvironment{theorem}{\begin{lemma1}{\bf Theorem.}}{\end{lemma1}}
\newenvironment{proposition}{\begin{lemma1}{\bf Proposition.}}{\end{lemma1}}
\newenvironment{remark}{\begin{lemma1}{\bf Remark.}\rm}{\end{lemma1}}
\newenvironment{definition}{\begin{lemma1}{\bf Definition.}}{\end{lemma1}}
\begin{document}

\title[Kodaira dimension of algebraic fiber spaces over surfaces]{Kodaira dimension of algebraic fiber spaces over surfaces}

\author{Junyan Cao}
\email{junyan.cao@imj-prg.fr}
\address{Universit\'e Paris 6 \\
Institut de Math\'{e}matiques de Jussieu\\
4, Place Jussieu, Paris 75252, France }

\classification{14E30, 32J25, 14D06, 14J40}
\keywords{Iitaka conjecture, positivity of direct images}
\thanks{This work is partially supported by the Agence Nationale de la Recherche grant ``Convergence de Gromov-Hausdorff en g\'{e}om\'{e}trie k\"{a}hl\'{e}rienne"
(ANR-GRACK).}

\begin{abstract} 
In this short note we prove the Iitaka $C_{nm}$ conjecture for algebraic fiber spaces over surfaces. 
\end{abstract}
\maketitle

\section{Introduction}

Let $p: X\rightarrow Y$ be a fibration between two projective manifolds. A central problem in birational geometry is the 
\textit{Iitaka conjecture}, stating that
\begin{equation}
\kappa (X) \geq\kappa (Y) +\kappa (X/Y) 
\end{equation}
where $\kappa (X)$ is the Kodaira dimension of $X$ and $\kappa (X/Y)$ is the Kodaira dimension of the generic fiber.
\smallskip

\noindent In this note, we prove that the log-version of Iitaka conjecture holds true, provided that the base $\dim Y \leq 2$; this generalizes
a result obtained by C. Birkar in \cite[Thm 1.4]{Bir09} and a result of Y. Kawamata in \cite{Kaw82a}.
More precisely, we have the following statement.

\begin{theorem}\label{main}
Let $p: X\rightarrow Y$ be a fibration between two projective manifolds. Let $F$ be the generic fiber and let $\Delta$ be a $\bQ$-effective klt divisor on $X$.
Set $\Delta_F := \Delta |_F$.
If $\dim Y \leq 2$, then we have
\begin{equation}\label{maininequlity}
\kappa (K_X +\Delta) \geq \kappa (K_F +\Delta_F) +\kappa (Y) . 
\end{equation}
\end{theorem}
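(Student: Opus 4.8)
\medskip
\noindent{\bf Sketch of the proof.}
The plan is to run the classical Viehweg--Kawamata argument for the additivity of Kodaira dimensions, replacing Viehweg's weak positivity of direct images by the sharper \emph{effective global generation} of twisted relative pluricanonical sheaves that follows from the Ohsawa--Takegoshi extension theorem (in the spirit of Berndtsson--P\u{a}un, P\u{a}un--Takayama, Popa--Schnell, Dutta--Murayama and Cao--P\u{a}un); this is what makes it possible to dispense with any minimal model hypothesis on the fibre. Write $\kappa_F:=\kappa(K_F+\Delta_F)$. One may assume $\kappa_F\ge 0$ and $\kappa(Y)\ge 0$, as otherwise the right-hand side of \eqref{maininequlity} is $-\infty$. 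After suitable birational modifications one may also assume $(X,\Delta)$ log smooth, $Y$ smooth, and $p$ prepared in the sense of Viehweg (its discriminant a simple normal crossing divisor); this does not change \eqref{maininequlity}. For $m$ sufficiently divisible set $\mathcal F_m:=p_*\mathcal O_X\big(m(K_{X/Y}+\Delta)\big)$; it is a torsion-free sheaf on $Y$ of generic rank $r_m=h^0\big(F,m(K_F+\Delta_F)\big)$, and since the log canonical ring of the klt pair $(F,\Delta_F)$ is finitely generated (Birkar--Cascini--Hacon--McKernan) one has $r_m\sim c\,m^{\kappa_F}$ along a suitable arithmetic progression of $m$.

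\medskip
\noindent{\bf Step 1: reduction to $Y$ of general type.}
For $\kappa(Y)=1$ I would use Viehweg's standard argument: the Iitaka fibration of $Y$ is a fibration onto a curve, over which one applies Kawamata's logarithmic $C_{n,1}$ (\cite{Kaw82a}), with a boundary coming from the canonical bundle formula, and then uses the (now known) weak positivity of the $\mathcal F_m$ to handle the fibrewise term. The case $\kappa(Y)=0$, $\dim Y=2$ is more delicate: through the Albanese morphism and Kawamata's theorem it reduces to a fibration over an abelian surface --- the theorem of Cao--P\u{a}un --- except when $Y$ is, up to a finite \'etale cover, a K3 or Enriques surface, where $K_Y$ is $\bQ$-trivial and one must argue directly from the weak positivity of the $\mathcal F_m$. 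This reduces everything to the case in which $Y$ is of general type, so that $K_Y$ is big; this case contains \cite{Bir09} and carries the new content.

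\medskip
\noindent{\bf Step 2: the case $Y$ of general type.}
Since $K_X+\Delta=p^*K_Y+(K_{X/Y}+\Delta)$, the projection formula gives $H^0\big(X,m(K_X+\Delta)\big)=H^0\big(Y,\mathcal F_m\otimes\mathcal O_Y(mK_Y)\big)$ for every $m$, so it suffices to prove $h^0\big(Y,\mathcal F_m\otimes\mathcal O_Y(mK_Y)\big)\gtrsim m^{\kappa_F+2}$ for $m$ large and divisible. The input I would use is: for a fixed ample and globally generated line bundle $A$ on $Y$, the sheaf $\mathcal F_m\otimes\mathcal O_Y(3A)$ (with $3=\dim Y+1$) is globally generated over a dense Zariski-open subset of $Y$ that can be taken independent of $m$ --- precisely the effective Ohsawa--Takegoshi-type statement for $p_*\big(m(K_{X/Y}+\Delta)\big)$, and the only step where positivity of direct images is really invoked. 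Granting it, there is a generically injective map $\mathcal O_Y^{\oplus r_m}\hookrightarrow\mathcal F_m\otimes\mathcal O_Y(3A)$; tensoring by $\mathcal O_Y(mK_Y-3A)$, which is big --- hence has $\gtrsim m^2$ sections --- because $K_Y$ is big, gives $h^0\big(Y,\mathcal F_m\otimes\mathcal O_Y(mK_Y)\big)\ge r_m\cdot h^0\big(Y,\mathcal O_Y(mK_Y-3A)\big)\gtrsim m^{\kappa_F}\cdot m^2$, whence $\kappa(K_X+\Delta)\ge\kappa_F+2=\kappa(K_F+\Delta_F)+\kappa(Y)$.

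\medskip
\noindent{\bf Where the difficulty lies.}
The heart of the matter is the uniform generic global generation of $\mathcal F_m\otimes\mathcal O_Y(3A)$. Viehweg's weak positivity of the $\mathcal F_m$ alone does not suffice --- it yields the conclusion only once $\det\mathcal F_m$ is known to be big, which forces the fibre to be of general type; for a general klt fibre one must instead extend twisted $m$-canonical forms from the fibres by Ohsawa--Takegoshi and then control the singular Hermitian metric this produces on $\mathcal F_m$, in particular bound its multiplier ideal sheaves. It is at this point that the hypothesis $\dim Y\le2$ enters: the locus over which that metric is singular projects to a subvariety of the surface $Y$ that is small enough --- of codimension $\ge1$, with a controllable codimension-$2$ part --- to be absorbed in the estimates. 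The only other genuinely delicate step is the $\kappa(Y)=0$, $\dim Y=2$ case of Step~1 when $Y$ is simply connected (K3 or Enriques), where an ample class cannot be subtracted from $mK_Y$ and one has to work from weak positivity and the $\bQ$-triviality of $K_Y$ alone.
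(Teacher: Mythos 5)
Your proposal has the difficulty located in the wrong place, and the case that actually carries the new content of the theorem is left without an argument. When $Y$ is of general type ($\kappa(Y)=2$, $K_Y$ big), the inequality is essentially known and the paper disposes of it in one line: your Step 2 (generic global generation of $\cF_m\otimes\cO_Y(3A)$ with the open set independent of $m$, tensoring with the big bundle $mK_Y-3A$, counting sections) is a correct and standard argument, but it is not where the theorem is hard. Likewise $\kappa(Y)=1$ goes through the fibration over a curve and the klt $C_{n,1}$, much as you sketch. The genuinely new case is $\kappa(Y)=0$ with $Y$ (after an \'etale cover) a K3 surface, and for this your proposal says only that ``one must argue directly from the weak positivity of the $\cF_m$'' --- which is not a proof, and weak positivity alone does not produce sections of $m(K_{X/Y}+\Delta)$ on $X$ when nothing big can be added.

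Concretely, the missing argument runs as follows. On the K3 surface one takes the Zariski decomposition $\det p_\star(mK_{X/Y}+m\Delta)\equiv_\bQ\sum a_i[C_i]+L_m$ and splits into cases according to $\nd(L_m)$. The cases $\nd(L_m)=2$ and $\nd(L_m)=1$ can be reduced to bigness of a determinant (possibly after base change by the elliptic fibration $Y\to\bP^1$ induced by $L_m$) together with an Ohsawa--Takegoshi extension from a fiber on which the direct image is hermitian flat. The hardest case is $\nd(L_m)=0$: then the curvature current of $\det h$ is supported on the exceptional curves $C_i$, so by the regularity result for positively curved singular metrics with flat determinant, $(p_\star(mK_{X/Y}+m\Delta),h)$ is hermitian flat on $Y_1\setminus(\cup C_i)$. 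One then invokes Campana's orbifold uniformization theorem for the contraction $Y_{\can}$ of the $C_i$ (a compact K\"ahler orbifold with $c_1=0$) to conclude that $\pi_1(Y\setminus\cup C_i)$ is almost abelian, and produces sections of $mr(K_{X/Y}+\Delta)$ either by averaging the monodromy representation over a finite fundamental group (parallel transport of $\prod_a\rho(a)(f)$) or by passing to a torus orbifold cover and applying the abelian-base theorem; a uniform $C^0$ bound coming from Ohsawa--Takegoshi then extends these sections across $p^{-1}(\cup C_i)$. None of these ingredients --- the numerical-dimension trichotomy on the K3, the flatness of the direct image metric, the orbifold fundamental group computation, the parallel-transport construction --- appear in your proposal, so the case you yourself flag as ``genuinely delicate'' is in fact an unfilled gap, and it is the whole point of the theorem.
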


\bigskip

We will next explain the main steps of the proof. Since $\dim Y \leq 2$, 
we can assume that $K_Y$ is nef, by using the MMP. Three cases as follows might occur, according to the 
Kodaira dimension of $Y$.
\smallskip

If $\kappa (Y) \geq 1$, the inequality \eqref{maininequlity} is quickly verified 
by using \cite{Kaw82a}. 
\smallskip

If $\kappa (Y)=0$, by the classification theory we know that $Y$ is a torus or a K3 surface, modulo a 
finite \'{e}tale cover. If $Y$ is a torus, \eqref{maininequlity}
is proved in \cite{CH11} for the absolute case (i.e., $\Delta= 0$) 
and in \cite{CP17} for the klt pair case. 
Therefore, to 
prove \eqref{maininequlity}, it is enough to assume that $Y$ is a K3 surface. 
In order to treat this case, we rely on two
 main ingredients, namely the positivity of the direct images $p_\star (m K_{X/Y}+m\Delta)$
and the geometry of orbifold Calabi-Yau surfaces. 
Different aspects of the first topic were 
extensively studied in
\cite{Gri70, Fuj78, Kaw82a, Kaw82b, Kol85, Vie95, Ber09, BP10, PT14, BC15, Fuj16, HPS16, KP17}, among many other articles.
In our set-up, this implies 
that $\det p_\star (m K_{X/Y}+m\Delta)$ is pseudo-effective (by using \cite{PT14}).
As $Y$ is a K3 surface, the numerical dimension of $\det p_\star (m K_{X/Y}+m\Delta)$ coincides with its Iitaka dimension. 
If the numerical dimension $\nd (\det p_\star (m K_{X/Y}+m\Delta)) \geq 1$,
we achieve our goals by standard arguments. If 
$\nd (\det p_\star (m K_{X/Y}+m\Delta)) = 0$,
we can show that there exists a finite set of exceptional curves $[C_i]$ on $Y$ such that $p_\star (m K_{X/Y}+m\Delta)$ is hermitian flat on $Y\setminus (\cup C_i)$,
by using the results 
in \cite{PT14,CP17,HPS16}. 
At this point we 
use the second ingredient, namely the uniformization theorem for the compact K\"{a}hler orbifolds with trivial first Chern class, cf. \cite{Cam04a}. We thus infer that 
the fundamental group of $Y\setminus (\cup C_i)$ is almost-abelien. 
Therefore we can construct sufficient elements in $H^0 (X, m K_{X/Y} +m\Delta)$ by using parallel transport, and \eqref{maininequlity} is proved.

\medskip

\textbf{Acknowledgements:} We would like to thank J.-A. Chen who brought my attention to this problem and valuable suggestions on the article.
The author would also like to thank M. P\u{a}un for extremely enlightening discussions on the topic and several important suggestions on the first draft of the article. 
We are also indebted with H.-Y. Lin for pointing out to us the reference \cite{Cam04a}, which plays a key role in this article.
Last but not least, we would like to thank the anonymous referees for their valuable suggestions.

\section{Preparation}

\noindent In this section, we will recall the uniformization theorem for the compact K\"{a}hler orbifolds with trivial first Chern class \cite{Cam04a} (cf. also
\cite{CC14, GKP15}) as well as
the results concerning singular metrics on vector bundles and the positivity of direct images, cf. \cite{BP08,PT14,Rau15,Pau16,HPS16} for more details.

\medskip

\noindent First of all, we recall a few basic definitions concerning compact K\"{a}hler orbifolds by following \cite{Cam04a}.
\begin{definition}\label{basicdefn}\cite[Definition 3.1, Definition 5.1]{Cam04a}
\begin{enumerate}

\item[\rm{(i)}] A compact K\"{a}hler orbifold is a compact K\"{a}hler normal variety with only quotient singularities, i.e., for every point
$a\in X$, we can find a neighbourhood $U$ of $a$, and a biholomorphism $\psi: U \rightarrow \widetilde{U}/G$ where $\widetilde{U}$
is an open set in $\bC^n$ and $G\subset GL (n, \bC)$ is a finite subgroup acting on $\widetilde{U}$ with $\psi (a)= 0$. 
For every $g\in G$, the set of the fixed points of $g$ is of codimension at least $2$.

\smallskip

\item[\rm{(ii)}] Let $X^\star$ be the smooth locus of a compact K\"{a}hler orbifold $X$. We say that $X$ is simply connected
in the sense of orbifolds if $X^\star$ is simply connected.

\smallskip

\item[\rm{(iii)}] A holomorphic morphism between two K\"{a}hler orbifolds $r: X' \rightarrow X$ is said to be an orbifold cover, if 
it satisfies the following two conditions:
\begin{enumerate}

\item[$\bullet$] The restriction of $r$ to $r^{-1} (X^\star)$ is an \'{e}tale cover.

\item[$\bullet$] For every $a\in X$ with its neighbourhood $\widetilde{U}/G$ (cf. $(i)$),
each component of $r^{-1} (\widetilde{U}/G)$ is of the form $\widetilde{U} /G'$ for some subgroup $G'$ of $G$, and the restricted 
morphism $r|_{\widetilde{U} /G'}$ is nothing but the natural quotient morphism $\widetilde{U} /G' \rightarrow \widetilde{U} /G$.
\end{enumerate}

\smallskip

\item[\rm{(iv)}] A $m$-dimensional compact K\"{a}hler orbifold $X$ is called Calabi-Yau (resp. Hyperk\"{a}hler), 
if it is simply connected in the sense of orbifold (cf. $(ii)$) and it admits
a Ricci-flat K\"{a}hler metric such that the holonomy (when restricted to $X^\star$) is $SU (m)$ (resp. $Sp (m/2)$). 
\end{enumerate}

\end{definition}
\medskip

\noindent We state here the uniformization theorem for the compact K\"{a}hler orbifolds with trivial first Chern class, established in \cite{Cam04a}. The statement parallels the 
classical case of smooth  K\"{a}hler manifolds with trivial first Chern class. 

\begin{theorem}\cite[Thm 6.4]{Cam04a}\label{orbifolddecomp}
Let $X$ be a compact K\"{a}hler orbifold with $c_1 (X)=0$. Then $X$ admits a finite orbifold cover $\overline{X} = \overline{C} \times \overline{S} \times T$,
where $\overline{C}$ (resp. $\overline{S}$) is a finite product of Calabi-Yau K\"{a}hler orbifold (resp. Hyperk\"{a}hler) and $T$ is a complex torus.
\end{theorem}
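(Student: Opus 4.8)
\medskip
\noindent\emph{Sketch of proof.} The plan is to carry out the classical Beauville--Bogomolov decomposition argument in the orbifold category; the role of the hypothesis that every $g\in G$ fixes a set of codimension at least $2$ (Definition~\ref{basicdefn}\,(i)) is precisely to make the relevant removable-singularity arguments valid. First, since $X$ is a compact K\"ahler orbifold with $c_1(X)=0$, the orbifold form of the Calabi--Yau theorem provides a Ricci-flat K\"ahler metric $g$ on $X$, i.e. an honest Ricci-flat K\"ahler metric on the smooth locus $X^\star$ which in each orbifold chart $\widetilde U\to\widetilde U/G$ lifts to a $G$-invariant Ricci-flat K\"ahler metric on $\widetilde U$. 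Next I would pass to the orbifold universal cover $\pi\colon\widehat X\to X$, obtained by taking the universal cover of $X^\star$ and filling in its codimension-$\ge 2$ orbifold locus: the codimension hypothesis guarantees that $\widehat X$ is again a K\"ahler orbifold, simply connected in the sense of orbifolds, equipped with the complete Ricci-flat K\"ahler metric $\widehat g=\pi^\star g$ and with a properly discontinuous, cocompact action of $G:=\pi_1(X^\star)$ by holomorphic isometries, so that $X=\widehat X/G$.

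The heart of the proof is the decomposition of $(\widehat X,\widehat g)$. By the Cheeger--Gromoll splitting theorem in its orbifold form---applied to the complete Ricci-flat Riemannian manifold $\widehat X^\star$ whose metric completion $\widehat X$ has singular set of codimension $\ge 2$---one gets an isometric splitting $\widehat X\cong\bR^{2q}\times\widehat X'$ with $\widehat X'$ compact, simply connected (as an orbifold) and without Euclidean de Rham factor; since $\widehat g$ is K\"ahler and the splitting is carried by parallel distributions, it is holomorphic, so in fact $\widehat X\cong\bC^q\times\widehat X'$ with $\bC^q$ flat. Applying the de Rham decomposition theorem to $\widehat X'$ gives $\widehat X'\cong\prod_i Y_i$, each $Y_i$ being a compact, orbifold-simply-connected, Ricci-flat K\"ahler orbifold of irreducible holonomy; as a Ricci-flat K\"ahler holonomy lies in $SU$, Berger's classification forces $\Hol(Y_i)$ to be exactly $SU(n_i)$ or $Sp(n_i/2)$, i.e. each $Y_i$ is Calabi--Yau, resp. hyperk\"ahler, in the sense of Definition~\ref{basicdefn}\,(iv).

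It remains to descend to a finite orbifold cover of $X$ of the required shape. Each $Y_i$ has no nonzero holomorphic vector field: for Calabi--Yau, $T_{Y_i}\cong\Omega_{Y_i}^{n_i-1}$ (trivial canonical sheaf) has no global section, and for hyperk\"ahler, $T_{Y_i}\cong\Omega_{Y_i}^{1}$ (holomorphic symplectic form) has none either; hence $\Aut(Y_i)$, a discrete closed subgroup of the isometry group of a compact space, is finite. The action of $G$ on $\widehat X$ preserves the canonical Euclidean factor $\bC^q$ and permutes the irreducible factors $Y_i$ among isometric ones, so after replacing $G$ by a finite-index subgroup we may assume it preserves each $Y_i$; the resulting homomorphism $G\to\prod_i\Aut(Y_i)$ then has finite image, and its kernel $G_1$ has finite index in $G$, acts trivially on $\prod_i Y_i$ and acts on $\bC^q$ by complex-affine isometries, properly discontinuously and cocompactly. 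By Bieberbach's theorem $G_1$ contains a finite-index subgroup $G_0$ of translations, which is a lattice $\Lambda\subset\bC^q$, and then
\begin{equation*}
\overline{X}:=\widehat X/G_0\;\cong\;(\bC^q/\Lambda)\times\prod_i Y_i\;=\;T\times\overline{C}\times\overline{S},
\end{equation*}
with $T=\bC^q/\Lambda$ a complex torus and $\overline{C}$ (resp. $\overline{S}$) the product of the Calabi--Yau (resp. hyperk\"ahler) factors. Since $\widehat X\to X$ is an orbifold cover and $[G:G_0]<\infty$, the map $\overline{X}\to X$ is the desired finite orbifold cover.

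The main obstacle is the analytic input used above in the orbifold category: the existence of the Ricci-flat orbifold K\"ahler metric, and the orbifold versions of the Cheeger--Gromoll splitting and de Rham decomposition theorems. The codimension-$\ge 2$ hypothesis is exactly what allows one to treat $\widehat X^\star$ as an incomplete Riemannian manifold whose metric completion adds only a set too small to obstruct the extension of harmonic and parallel tensors and of the splitting distributions across the orbifold locus; granting these points, the group-theoretic descent of the third paragraph is routine.
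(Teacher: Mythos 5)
This theorem is quoted from \cite[Thm 6.4]{Cam04a} and the paper supplies no proof of its own, so the only available comparison is with Campana's argument; your sketch reproduces exactly that argument, namely the Beauville--Bogomolov decomposition transported to the orbifold category (orbifold Ricci-flat metric, orbifold universal cover, Cheeger--Gromoll and de Rham splittings, Berger holonomy classification, then Bieberbach descent to a finite cover). The outline is correct, with the caveat --- which you flag yourself --- that the substantive content is the validity of these analytic inputs for orbifolds with singular set of codimension at least $2$, and establishing that is precisely what \cite{Cam04a} (see also \cite{CC14, GKP15}) does.
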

\smallskip

\noindent Let $Y$ be a K3 surface and let $\cup C_i$ be a set of exceptional curves on $Y$.
By Grauert's criterion \cite[III, Thm 2.1]{BHPV}, there is a contraction morphism $\tau: Y\rightarrow Y_{\can}$ which 
contracts all $C_i$ to some points $p_i$ in a normal space $Y_{\can}$. As $K_Y$ is trivial, we know 
that $Y_{\can}$ is in fact a compact K\"{a}hler orbifold (\cite[Def 4.4, Rk 4.21]{KM98}, \cite[Example 3.2]{Cam04a}) with $c_1 (Y_{\can})=0$.
\medskip

\noindent As a corollary of Theorem \ref{orbifolddecomp}, we have.

\begin{proposition}\cite[Cor 6.7]{Cam04a}\label{uniformation}
Let $Y$ be K3 surface and let $\cup C_i$ be some exceptional curves on $Y$.
Then $\pi_1 (Y \setminus (\cup C_i))$ is almost-abelien. 

Moreover, let $\tau: Y\rightarrow Y_{\can}$ be the morphism which contracts the exceptional curves $C_i$ to some points $p_i \in Y_{\can}$.
If $\pi_1 (Y \setminus (\cup C_i))$ is not finite, there exists a finite orbifold cover from a complex torus $T$ to $Y_{\can}$:
$$\sigma: T\rightarrow Y_{\can} .$$
In particular, $\sigma$ is a non-ramified cover over $Y_{\can} \setminus (\cup p_i)$ 
and $\sigma^{-1} (p_i)$ is of codimension $2$ for every $i$.
\end{proposition}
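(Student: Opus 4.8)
The plan is to deduce the Proposition as a direct corollary of the uniformization Theorem~\ref{orbifolddecomp}, applied to the orbifold $Y_{\can}$ obtained by contracting the $C_i$. First I would recall, as already set up in the paragraph preceding the statement, that $Y_{\can}$ is a compact K\"{a}hler orbifold with $c_1(Y_{\can})=0$, so Theorem~\ref{orbifolddecomp} furnishes a finite orbifold cover $\overline{Y}=\overline{C}\times\overline{S}\times T\to Y_{\can}$. Since $\dim Y_{\can}=2$, the factors $\overline{C}$ and $\overline{S}$ are products of Calabi--Yau (resp. hyperk\"{a}hler) orbifolds of total dimension $\le 2$; in dimension $1$ there are no such (a Calabi--Yau orbifold of dimension $1$ would be an elliptic curve, which is not simply connected in the orbifold sense), and hyperk\"{a}hler forces even dimension, so the only possibilities are $\overline{Y}=T$ (a $2$-torus) or $\overline{Y}=\overline{S}$ a hyperk\"{a}hler orbifold surface, i.e. an orbifold K3. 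In either case $\pi_1$ of the smooth locus of $\overline{Y}$ is either trivial (K3 case) or $\bZ^4$ (torus case), and since an orbifold cover restricts to an \'{e}tale cover over the smooth loci, $\pi_1(Y\setminus(\cup C_i))$ — which equals the orbifold fundamental group of $Y_{\can}$, the smooth locus of $Y_{\can}$ being $Y\setminus(\cup C_i)$ up to the codimension-$2$ points $p_i$ — is a finite extension of one of these, hence almost-abelian. This gives the first assertion.

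For the second assertion, suppose $\pi_1(Y\setminus(\cup C_i))$ is infinite. Then the finite-index abelian subgroup is infinite, which rules out the K3 case (where the group is finite), so the cover must be $\overline{Y}=T$, a complex torus of dimension $2$. The composed map $\sigma:T\to\overline{Y}\to Y_{\can}$ is then the desired finite orbifold cover. By the definition of orbifold cover, $\sigma$ restricts to an \'{e}tale (hence non-ramified) cover over the smooth locus $Y_{\can}\setminus(\cup p_i)$; and over each chart $\widetilde U/G$ around $p_i$, the preimage is a disjoint union of $\widetilde U/G'$ mapping by the natural quotient, so $\sigma^{-1}(p_i)$ is the image of the origin-fiber, which (since each $g\in G$ fixes a set of codimension $\ge 2$, part (i) of Definition~\ref{basicdefn}) is a finite set of points in the smooth surface $T$, i.e. of codimension $2$. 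This is exactly the content of \cite[Cor~6.7]{Cam04a}, which I would cite for the precise bookkeeping.

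The one genuine point requiring care — the ``hard part'' — is the identification $\pi_1(Y\setminus(\cup C_i))\cong\pi_1^{\mathrm{orb}}(Y_{\can})$ together with the low-dimensional classification ruling out odd-dimensional Calabi--Yau factors and the trivial torus; this is where one must invoke that the exceptional curves $C_i$ are contracted to the singular points $p_i$ of $Y_{\can}$ and that $Y_{\can}$ has only quotient singularities (so that $Y\setminus(\cup C_i)=Y_{\can}\setminus(\cup p_i)=Y_{\can}^\star$), and that adding back the codimension-$2$ orbifold points does not change the fundamental group of the complement but does introduce the orbifold structure making Theorem~\ref{orbifolddecomp} applicable. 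Everything else is formal unwinding of Definition~\ref{basicdefn} and Theorem~\ref{orbifolddecomp}.
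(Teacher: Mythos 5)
Your proposal is correct and follows exactly the route the paper intends: the paper states this proposition without proof, as a direct citation of \cite[Cor.~6.7]{Cam04a} presented as a corollary of Theorem~\ref{orbifolddecomp} applied to the orbifold $Y_{\can}$ with $c_1(Y_{\can})=0$. Your unwinding of the two-dimensional case of the decomposition (complex $2$-torus versus orbifold K3, noting $SU(2)=Sp(1)$ so the two non-torus factors coincide in dimension two, with infinite $\pi_1(Y\setminus(\cup C_i))\cong\pi_1(Y_{\can}^\star)$ forcing the torus case) is precisely the intended argument.
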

\medskip

\noindent The following non-vanishing property for pseudo-effective line bundles on $K3$ surfaces is an immediate 
consequence of the abundance theorem (which holds true in dimension two).

\begin{proposition}\label{keyprop}
Let $Y$ be a $K3$ surface (in the smooth sense) and let $L$ be a pseudo-effective line bundle on $Y$. 
Then $L$ is $\bQ$-effective.
\end{proposition}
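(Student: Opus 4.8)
\smallskip
\noindent\emph{Proof strategy.} The plan is to reduce to the nef case via the Zariski decomposition and then conclude by Riemann--Roch (equivalently, by abundance in dimension two) on the K3 surface $Y$. Since $Y$ is a smooth projective surface and $L$ is pseudo-effective, the Zariski decomposition theorem (Zariski, Fujita) produces a decomposition $L \equiv P + N$, where $P$ is a nef $\bQ$-divisor, $N \geq 0$ is an effective $\bQ$-divisor, and the intersection form on the components of $N$ is negative definite; here $P$ and $N$ automatically have rational coefficients, being obtained by solving a linear system with integral coefficients. Choose $m \geq 1$ divisible enough that both $mP$ and $mN$ are integral divisors. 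Since on a K3 surface numerical and linear equivalence of $\bQ$-divisors agree (because $\Pic^0(Y) = 0$ and $\mathrm{NS}(Y)$ is torsion free), it suffices to show that $mP$ is effective: then $\cO_Y(mL) \cong \cO_Y(mP) \otimes \cO_Y(mN)$ has a nonzero section, $mN$ being an honest effective divisor.

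If $P \equiv 0$ we are done, so assume $P \not\equiv 0$. First I would record that $-mP$ is not effective for any $m \geq 1$: the nef divisor $mP$ is pseudo-effective, and if $-mP$ were effective it would be pseudo-effective as well; but a divisor $D$ with $D$ and $-D$ both pseudo-effective is numerically trivial, which would force $P \equiv 0$, contrary to assumption. Consequently, by Serre duality and $K_Y \cong \cO_Y$,
\[
h^2\big(Y, \cO_Y(mP)\big) = h^0\big(Y, \cO_Y(-mP)\big) = 0 .
\]
Riemann--Roch on the K3 surface $Y$ then gives
\[
h^0\big(Y, \cO_Y(mP)\big) \ \geq\ \chi\big(\cO_Y(mP)\big) \ =\ 2 + \tfrac12\,(mP)^2 \ =\ 2 + \tfrac{m^2}{2}\,P^2 \ \geq\ 2 ,
\]
since $P^2 \geq 0$ by nefness. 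Hence $mP$ is effective, and the argument is complete. Alternatively, once $P$ is nef one may observe that $P = K_Y + P$ is nef and invoke the abundance theorem in dimension two to conclude that $P$ is semiample, hence $\bQ$-effective; this is the formulation alluded to in the statement.

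I expect no serious obstacle: the proposition is essentially a formal consequence of the existence of Zariski decompositions on surfaces together with Riemann--Roch (or abundance), and the only point genuinely using the K3 hypothesis beyond $K_Y = 0$ is the passage from a numerical statement to an effective one, which relies on $H^1(Y, \cO_Y) = 0$ and the torsion-freeness of $\mathrm{NS}(Y)$. If one wishes to sidestep the vanishing $h^2(mP)=0$, one can instead treat the nef part by a short case split on $P^2$: when $P^2 > 0$ the divisor $P$ is big and Kodaira's lemma applies directly, and when $P^2 = 0$ one uses the Riemann--Roch computation above.
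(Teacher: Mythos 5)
Your proof is correct and follows essentially the same route as the paper: apply the Zariski decomposition to reduce to the nef part, then use that a nontrivial nef class on a K3 surface is effective. The only difference is that the paper simply cites this last fact (\cite[VIII, Prop 3.7]{BHPV}), whereas you reprove it via Serre duality and Riemann--Roch, which is the standard argument behind that reference.
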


\begin{proof}
Since $L$ is pseudo-effective, by using Zariski decomposition for surface \cite[Thm 1.12]{Fuj79}, we know that
$$L \equiv_\bQ \sum_{i=1}^p  a_i [C_i] +M ,$$
where $a_i \in \bQ^+$, $C_i$ are negative intersection curves, $M$ is nef and $M\cdot C_i =0$ for every $i$.
Since $Y$ is $K3$, all nef line bundles on $Y$ are effective
(cf. \cite[VIII, Prop 3.7]{BHPV}).
Therefore $L$ is $\bQ$-effective.
\end{proof}

\begin{remark}\label{semiample}
It is well-known that a nef line bundle $M$ on a K3 surface is semiample. If its numerical dimension $\nd (M)$ is equal to one, 
then it induces an elliptic fibration over $\bP^1$.
\end{remark}

\medskip

\noindent 
In the second part of this section we will recall a few definitions and results about the singular metrics on vector 
bundles and the positivity of direct images. We refer to \cite{BP08,Rau15,PT14,Pau16,HPS16} for more details.
\begin{definition}\label{singularmetric}
Let $E \rightarrow X$ be a holomorphic vector bundle on a manifold $X$ (which is not necessary compact). Locally, a singular hermitian metric $h_E$ on
$E$ is a measurable map from $X$ to the space of non-negative Hermitian forms on the fibers.
We say that $(E, h_E)$ is negatively curved, if $0 <\det h_E < +\infty$ almost everywhere and
$$x\rightarrow \ln |u|_{h_E} (x)  \qquad x\in X$$
is a psh function, for any choice of a holomorphic local section $u$ of $E$.

\smallskip

We say that the pair $(E, h_E)$ is positively curved, if the dual $(E^\star , h_E ^\star)$ is negatively curved.
We note it by $i\Theta_{h_E} (E) \succeq 0$.
\end{definition}

When $h_E$ is smooth, "positively curved" is nothing but the classical Griffiths semi-positivity.
The following result proved in \cite{PT14} plays an important role in this article.

\begin{theorem}\cite[Thm 5.1.2]{PT14}\label{maintool}
Let $p: X\rightarrow Y$ be a fibration between two projective manifolds and let $L$ be a line bundle on $X$ with a possibly singular metric $h_L$
such that $i\Theta_{h_L} (L) \geq 0$. Let $m\in \mathbb{N}$ such that the multiplier ideal sheaf $\mathcal{I} (h_L ^{\frac{1}{m}} |_{X_y})$
is trivial over a generic fiber $X_y$, namely $\int_{X_y} |e_L|_{h_L} ^{\frac{2}{m}} < +\infty$, where $e_L$ is a basis of $L$. 

Let $Y_1$ be the locally free locus of $p_\star (m K_{X/Y} +L)$. 
Then the vector bundle $p_\star (m K_{X/Y} +L)$ over $Y_1$ 
admits a possibly singular hermitian metric $h$ such that $i\Theta_{h} (p_\star (m K_{X/Y} +L)) \succeq 0$ on $Y_1$. 
Moreover, $h$ induces a possibly singular metric
$\det h$ on the line bundle $\det p_\star (m K_{X/Y} + L)$ over $Y$ such that 
$$i\Theta_{\det h} (\det p_\star (m K_{X/Y} + L)) \geq 0 \qquad\text{ on }Y$$
in the sense of current. 
\end{theorem}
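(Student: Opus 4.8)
\textbf{The plan} is to equip $p_\star(mK_{X/Y}+L)$ with its fibrewise $L^{2/m}$-metric (the relative $m$-Bergman kernel metric), to prove that this metric is Griffiths semipositive on the dense open locus where the family is well behaved, and then to extend the relevant psh weights across the remaining thin sets. Concretely, I would let $Y_0\subseteq Y_1$ be the Zariski-open subset of $Y$ over which $p$ is a submersion, $E:=p_\star(mK_{X/Y}+L)$ is locally free and compatible with base change, and $\mathcal{I}(h_L^{1/m}|_{X_y})$ is trivial; its complement in $Y$ is an analytic subset of codimension $\geq 1$. For $y\in Y_0$ the fibre $E_y$ is $H^0(X_y,\,mK_{X_y}+L|_{X_y})$, and any section $s\in E_y$ defines a positive measure $|s|_{h_L}^{2/m}$ on $X_y$ (take the $m$-th root of the $m$-canonical factor and use $h_L^{1/m}$ on the twist); I set $\|s\|_y^2:=\bigl(\int_{X_y}|s|_{h_L}^{2/m}\bigr)^{m}$. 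The hypothesis on the multiplier ideal makes these integrals finite, so this is a genuine nondegenerate hermitian metric $h$ on $E|_{Y_0}$, which induces a metric $\det h$ on $\det E|_{Y_0}$.

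To prove positivity on $Y_0$ I would first reduce to the case of a smooth $h_L$: by Demailly regularization, approximate $h_L$ from above by smooth metrics $h_\varepsilon$ with $i\Theta_{h_\varepsilon}(L)\geq -\varepsilon\, p^\star\omega_Y$, establish the curvature estimate for each $h_\varepsilon$, and pass to the limit, the multiplier-ideal hypothesis guaranteeing that the limiting metric remains nondegenerate. With $h_L$ smooth, the assertion $i\Theta_h(E)\succeq 0$ amounts (by Definition \ref{singularmetric}) to saying that $\log|u|_{h^\star}$ is psh over $Y_0$ for every local holomorphic section $u$ of $E^\star$, equivalently that the relative $m$-Bergman kernel metric on $mK_{X/Y}+L$ has semipositive curvature on $p^{-1}(Y_0)$. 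For $m=1$ this is exactly the Berndtsson--P\u{a}un curvature theorem \cite{BP08}, proved by a fibrewise Ohsawa--Takegoshi extension argument. For $m\geq 2$ I would run the analogous argument for the $m$-Bergman kernel: to verify the sub-mean-value inequality at a point $x_0\in X_{y_0}$, extend the corresponding extremal section $s_0$ to a neighbourhood in $X$ with controlled $L^{2/m}$-norm, using an Ohsawa--Takegoshi-type extension theorem whose weight is twisted by the power $|s_0|^{2(1-1/m)}$ of that section --- this device ``uses up'' the extremal section and converts an $m$-canonical extension into an $L^2$-type one. This is where I expect the main difficulty to lie: it requires the twisted extension theorem with weight $|s_0|^{2(1-1/m)}e^{-\varphi_L}$, uniform control of the extension constant as $y_0$ varies, and the regularization above to accommodate a genuinely singular $h_L$.

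Finally I would pass from $Y_0$ to $Y$. On $Y_0$, Griffiths semipositivity of $(E,h)$ gives $i\Theta_{\det h}(\det E)=\Tr\, i\Theta_h(E)\geq 0$, so in a local trivialisation the weight $\varphi$ of $\det h$ is psh on $Y_0$. Near a point of $Y\setminus Y_0$, a local frame of the line bundle $\det E$ (well defined on all of $Y$ as the determinant of the torsion-free sheaf $p_\star(mK_{X/Y}+L)$) is a wedge of local sections of $E$ that extend holomorphically across the thin set; a standard argument on the behaviour of fibre integrals near the degeneration locus --- using once more the finiteness from the multiplier-ideal hypothesis --- shows that the associated fibrewise $L^{2/m}$-norms stay locally bounded away from zero, so that $\varphi$ is locally bounded above near $Y\setminus Y_0$. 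Since $Y\setminus Y_0$ has codimension $\geq 1$, the removable-singularity theorem for psh functions extends $\varphi$ to a psh function on the whole chart, which is precisely the statement $i\Theta_{\det h}(\det p_\star(mK_{X/Y}+L))\geq 0$ as a current on $Y$. Running the same removable-singularity argument on the functions $\log|u|_{h^\star}$, for $u$ a local holomorphic section of $E^\star$, shows in addition that $h$ itself extends as a possibly singular Griffiths-semipositive metric across $Y_1\setminus Y_0$, which gives the first assertion on all of $Y_1$ and completes the proof.
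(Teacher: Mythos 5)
Your overall architecture (fibrewise norm, Griffiths positivity over the good locus $Y_0$ via Ohsawa--Takegoshi, then removable singularities for the psh weights across $Y_1\setminus Y_0$ and $Y\setminus Y_0$) is the standard one, and your last paragraph is essentially how [PT14] extends the metric. But there is a genuine gap at the very first step: the quantity $\|s\|_y^2:=\bigl(\int_{X_y}|s|_{h_L}^{2/m}\bigr)^m$ is \emph{not} a hermitian metric on the vector bundle $E=p_\star(mK_{X/Y}+L)$. It is not induced by any sesquilinear pairing (already for $m=2$ it violates the parallelogram law: on a rank-two fibre with two orthogonal ``delta-like'' sections $s,t$ one gets $\|s\pm t\|^2=4$ while $\|s\|^2=\|t\|^2=1$), and for $m>2$, since $2/m<1$, it is not even a norm. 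What you have constructed is the Narasimhan--Simha \emph{Finsler} metric. The theorem asserts a singular \emph{hermitian} metric in the sense of Definition \ref{singularmetric}, and the rest of the paper genuinely needs this: Proposition \ref{maintool2} (hermitian flatness when $\det h$ is flat) and the parallel-transport argument in Case 3 of the main proof have no meaning for a Finsler metric.

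The repair is exactly the construction recalled in Remark \ref{hermtimetricconstr}, which is where your $L^{2/m}$-idea and the twisted Ohsawa--Takegoshi extension actually belong. One first puts the fibrewise $m$-relative Bergman kernel metric $h_B$ on the \emph{line bundle} $K_{X/Y}+\frac1m L$ on $X$ (defined by the extremal problem over sections of $L^{2/m}$-norm one --- a line bundle has no hermitian-versus-Finsler issue), and proves $i\Theta_{h_{L_1}}(L_1)\ge 0$ for $L_1=(m-1)K_{X/Y}+L$ with $h_{L_1}=(m-1)h_B+\frac1m h_L$; this is the step that uses the Ohsawa--Takegoshi theorem twisted by $|s_0|^{2(1-1/m)}$, as in [BP10, A.2]. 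One then defines $h$ on $p_\star(mK_{X/Y}+L)=p_\star(K_{X/Y}+L_1)$ as the honest $L^2$ inner product $\int_{X_y}f\bar g\,e^{-\varphi_{L_1}}$, which \emph{is} hermitian, and whose Griffiths semipositivity is the $m=1$ theorem of Berndtsson--P\u{a}un applied to $(L_1,h_{L_1})$. A secondary point: your proposed reduction to smooth $h_L$ by Demailly regularization does not work as stated --- regularization produces a loss $-\ep\,\omega_X$ (controlled by the global geometry of $X$), not $-\ep\,p^\star\omega_Y$, and a negative contribution in the fibre directions ruins the fibrewise extension estimates; [BP08, PT14] instead run the Ohsawa--Takegoshi argument directly with the singular weight, which is another reason to follow their route.
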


\begin{remark}\label{hermtimetricconstr}
Let us recall briefly the construction of the metric $h$: 
Let $h_B$ be the $m$-relative Bergman kernel metric on $K_{X/Y}+\frac{1}{m} L$ constructed in \cite[A.2]{BP10}.
Set $L_1 :=(m-1) K_{X/Y}+ L$ and $h_{L_1} := (m-1) h_B +\frac{1}{m} h_L $. Thanks to \cite[A.2]{BP10}, we know that
\begin{equation}\label{postivcur}
 i\Theta_{h_{L_1}} (L_1) \geq 0 \qquad\text{on }X
\end{equation}
in the sense of current. 
Now $h_{L_1}$ induces a Hodge type metric $h$ on $\pi_\star (m K_{X/Y} +L)$ on the smooth locus $Y_0$ of $\pi$ as follows: let $X_y$ be a smooth fiber and 
let $f\in H^0 (X_y, m K_{X/Y} +L)$. As $m K_{X/Y}+L = K_{X/Y} +L_1$, the norm 
$$\|f\|_h ^2 := \int_{X_y} |f|_{h_{L_1}} ^2$$
is well defined. Since $h_L$ is not necessarily smooth, $h$ is a possibly singular hermitian metric on $(\pi_\star (m K_{X/Y} +L), Y_0)$.
Thanks to \cite{Ber09,BP08}, we can prove that $(p_\star (m K_{X/Y} + L), h)$ is positively curved on $Y_0$.
By studying the comportment of $h$ near $Y_1\setminus Y_0$, \cite{PT14} proved finally that $h$ can be extended 
as a possibly singular hermitian metric on $Y_1$ with positive curvature in the sense of Definition \ref{singularmetric}.
\end{remark}

The following proposition comes from the standard extension theorem.

\begin{proposition}\label{specialOT}
In the setting of Theorem \ref{maintool}, 
we suppose moreover that there exists a fibration $q : Y\rightarrow Z$ to some projective manifold 
$Z$. Let $H$ be a pseudo-effective line bundle on $Y$ with a possible singular metric $h_H$ such that $i\Theta_{h_H} (H) \geq 0$
in the sense of current. 
Let $A_Z$ be an ample line bundle on $Z$.
Then for $c\in\bN$ large enough (depending only on $A_Z$ and $Z$), the following extension property holds:
\smallskip

Let $z\in Z$ be a generic point and let $X_z$ (resp. $Y_z$) be the fiber of $p\circ q$ (resp. $q$) over $z$.
Let $e\in \mathcal{O}_{Z, z}(c\cdot A_Z)$ and
let $s\in H^0 (Y_z , K_Y \otimes H \otimes p_\star (m K_{X/Y} +L))$ such that 
\begin{equation}\label{l2condit}
\int_{Y_z} |s|^2 _{h_H, h} < + \infty , 
\end{equation}
where $h$ is the metric on $p_\star (m K_{X/Y} +L)$ in Theorem \ref{maintool} \footnote{As $q^\star K_Z$ is a trivial bundle on $Y_z$,
modulo this trivial bundle, $|s|^2 _{h_H, h}$ can be seen as a volume form on $Y_z$. Therefore the integral \eqref{l2condit} is well-defined.}.
Then there exists a section 
$$S\in H^0 (Y , K_Y \otimes H \otimes p_\star (m K_{X/Y} +L)\otimes q^\star (c A_Z))$$
such that $S |_{Y_z} = s \otimes  q^\star e$.
\end{proposition}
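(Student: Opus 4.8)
The plan is to deduce this from the Ohsawa--Takegoshi type extension theorem for the positively curved bundle $p_\star(m K_{X/Y}+L)$ on the total space $Y$, applied fiberwise over $Z$ with respect to the fibration $q$. The key point to arrange is that the twisting bundle absorbs the curvature defects coming from the base $Z$, which is exactly what the constant $c$ achieves.

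\begin{proof}
By Theorem \ref{maintool}, the vector bundle $G:= p_\star(m K_{X/Y}+L)$ carries a (possibly singular) hermitian metric $h$ with $i\Theta_h(G)\succeq 0$ on the locally free locus $Y_1$; since the generic fiber $Y_z$ of $q$ is contained in $Y_1$ (the non--locally--free locus has codimension $\ge 1$ and $z$ is generic), $h$ restricts to a positively curved metric on $G|_{Y_z}$. Likewise $h_H$ restricts to a positively curved metric on $H|_{Y_z}$. Fix $A_Z$ ample on $Z$; by standard effective very ampleness (or simply by choosing $c$ so that $c A_Z - K_Z$ is ample and globally generated), there is $c_0\in\bN$, depending only on $(Z,A_Z)$, such that for all $c\ge c_0$ the line bundle $q^\star(cA_Z)$ admits a smooth metric whose curvature dominates any fixed (independent of $z$) reference curvature form; in particular one can write $q^\star(cA_Z) = q^\star K_Z \otimes N$ with $N$ positively curved and globally generated. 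This is the step where $c$ enters, and it is chosen once and for all.

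Now apply the Ohsawa--Takegoshi--Manivel extension theorem on $Y$, for the adjoint bundle $K_Y\otimes\big(H\otimes G\otimes q^\star(cA_Z)\big)$, with respect to the smooth subvariety $Y_z = q^{-1}(z)$, which has trivial normal bundle (being a generic fiber of $q$). Here $K_{Y_z} = K_Y|_{Y_z}$ since $q^\star K_Z|_{Y_z}$ is trivial, so the datum $s\in H^0(Y_z, K_Y\otimes H\otimes G)$ together with $q^\star e$ (a section of $q^\star(cA_Z)$ over a neighbourhood of $Y_z$, trivialising $q^\star K_Z$ there via $e$) gives a section of the adjoint bundle on $Y_z$. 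The total metric on $H\otimes G\otimes q^\star(cA_Z)$ is positively curved in the sense of currents, being a tensor product of $h_H$, $h$, and the smooth positively curved metric on $q^\star(cA_Z)$; the $L^2$ hypothesis $\int_{Y_z}|s|^2_{h_H,h}<+\infty$ in \eqref{l2condit} is precisely the finiteness needed to apply the extension theorem (the factor $|e|^2$ on $q^\star(cA_Z)$ is bounded near $Y_z$). We obtain $S\in H^0(Y, K_Y\otimes H\otimes G\otimes q^\star(cA_Z))$ with $S|_{Y_z} = s\otimes q^\star e$, as claimed.

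The main technical obstacle is making precise the extension theorem for a \emph{vector bundle} $G$ with a possibly singular positively curved metric, along a positive--dimensional submanifold $Y_z$. This is handled by the by--now standard vector--bundle version of Ohsawa--Takegoshi (see \cite{PT14,HPS16}): one uses that $G$ with $h$ is a limit of Griffiths semipositive smooth metrics on $Y_1$, or equivalently works with the associated line bundle $\mathcal{O}_{\bP(G^\star)}(1)$ on the projectivised bundle, where the classical $L^2$ extension applies; the singular nature of $h$ and $h_H$ only improves the plurisubharmonicity and is harmless since the $L^2$ condition \eqref{l2condit} is imposed. A minor additional point is that $q^\star e$ a priori extends $e$ only near $Y_z$; but since $z$ is generic and we only need $S|_{Y_z}=s\otimes q^\star e$, it suffices to apply extension on a neighbourhood $q^{-1}(U)$ of $Y_z$ and then re--extend to $Y$ using the ampleness of $A_Z$ on $Z$, enlarging $c$ if necessary, which is again absorbed into the choice of $c$ depending only on $(Z,A_Z)$.
\end{proof}
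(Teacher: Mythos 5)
Your overall strategy --- extend $s\otimes q^\star e$ by an Ohsawa--Takegoshi type argument, using the twist by $cA_Z$ to absorb the curvature of the base --- is the right shape, but the step you yourself flag as ``the main technical obstacle'' is a genuine gap rather than a routine verification. You invoke an $L^2$ extension theorem on $Y$ for the higher-rank bundle $G=p_\star(mK_{X/Y}+L)$ equipped with the possibly singular positively curved metric $h$, along the positive-dimensional submanifold $Y_z$. No such theorem is available off the shelf in this generality, and neither of your two proposed reductions closes the gap: (i) a metric that is positively curved in the sense of Definition \ref{singularmetric} need not be a limit of smooth Griffiths semipositive metrics --- the failure of such regularization is precisely one of the known difficulties with singular metrics on vector bundles, cf.\ \cite{Rau15}; (ii) passing to $\sO_{\bP(G^\star)}(1)$ does not directly help either, since the adjoint bundle on the projectivized bundle involves $\sO_{\bP(G^\star)}(\rk G)$ twisted by a relative canonical bundle, so the extension problem obtained there is not the one you need, and the transfer of the $L^2$ condition \eqref{l2condit} across this reduction is not addressed.

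The paper sidesteps all of this with one observation you missed: by Remark \ref{hermtimetricconstr}, $h$ is by construction the fiberwise $L^2$ (Hodge-type) metric attached to the positively curved line bundle $(L_1,h_{L_1})$ with $L_1=(m-1)K_{X/Y}+L$ on the total space $X$. Via the projection formula, a section $s\in H^0(Y_z, K_Y\otimes H\otimes p_\star(mK_{X/Y}+L))$ is the same thing as a section $u\in H^0(X_z, K_X+H+L_1)$ on the fiber $X_z$ of $p\circ q$, and by Fubini the hypothesis \eqref{l2condit} becomes exactly $\int_{X_z}|u|^2_{h_H,h_{L_1}}<+\infty$. One then applies only the classical line-bundle Ohsawa--Takegoshi theorem to the fibration $p\circ q: X\to Z$, extends $u\otimes (p\circ q)^\star e$ to a section $U$ on $X$, and pushes $U$ back down to the desired $S$ on $Y$. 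If you replace your appeal to a vector-bundle extension theorem by this lift to $X$, the remaining ingredients of your argument (the choice of $c$ depending only on $(Z,A_Z)$ and the triviality of $q^\star K_Z$ on $Y_z$) do go through.
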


\begin{proof}
Let $(L_1, h_1)$ be the line bundle constructed in Remark \ref{hermtimetricconstr}. Then $s$ induces a section $u\in H^0 (X_z, K_X +H +L_1)$
and \eqref{l2condit} implies that
$$\int_{X_z} |s|^2 _{h_H, h_1} < +\infty .$$
For $c \in\mathbb{N}$ large enough (depending only on $Z$ and $A_Z$),
by the standard Ohsawa-Takegoshi extension theorem (cf. for example \cite[Chapter 13]{Dem12}), we can find a 
$$U\in H^0 (X, K_X +H+L_1 + (p\circ q)^\star c A_Z)$$
such that $U |_{X_z} = u \otimes (p\circ q)^\star e$. Then $U$ induces a section 
$$S\in H^0 (Y , K_Y \otimes H \otimes p_\star (m K_{X/Y} +L)\otimes q^\star (c A_Z))$$ 
such that $S |_{Y_z} = s \otimes  q^\star e$ and the proposition is proved.
\end{proof}

As another direct consequence of the Ohsawa-Takegoshi extension, the following proposition is will be important for us.
\begin{proposition}\cite[A.2]{BP10}\label{uniformbound}
In the setting of Theorem \ref{maintool}, let $U$ be a small Stein open subset of $X$ and let $V\Subset U$ be some open set of compact support in $U$. 
Let $e$ be a basis of $m K_{X/Y} +L$ over $U$. Then there exists a uniform constant $C (U, V, e)$ depending only on $U, V, e$ such that
for every $t\in \pi (V)$ and every $s\in H^0 (X_t , m K_{X/Y}+L)$, we have
$$\|\frac{s}{e} \|_{C^0 (V \cap \pi^{-1} (t))} \leq C (U,V,e) \cdot \|s\|_h .$$
\end{proposition}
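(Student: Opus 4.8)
The plan is to localize the statement to a coordinate chart adapted to $\pi$, bound the fixed frame $e$ from below in the metric $h_{L_1}$ by exploiting the plurisubharmonicity recorded in \eqref{postivcur}, and then conclude by the sub-mean value property of plurisubharmonic functions. Since the Hodge-type metric $h$ only enters over the smooth locus $Y_0$ of $\pi$, I may assume $U\subseteq\pi^{-1}(Y_0)$. Covering $\overline V$ by finitely many smaller product charts and taking a maximum of the resulting constants at the end, I further reduce to the case where $U$ is a Stein coordinate ball over which $\pi$ is a coordinate projection, with coordinates $(t',w)$, $w\in\bC^{d}$ and $d:=\dim X-\dim Y$, so that for $t\in\pi(U)$ the slice $U\cap X_t$ is a fixed ball in $\bC^d$ and the corresponding Lebesgue measures $dV_{X_t}$ coincide under the obvious identification. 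Fix $V\Subset V'\Subset U$ of the same product shape.

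Write $mK_{X/Y}+L=K_{X/Y}+L_1$, choose a holomorphic frame $f$ of $L_1$ on $U$, and let $\varphi_{L_1}$ be the weight of $h_{L_1}$ relative to $f$; by \eqref{postivcur} it is plurisubharmonic on $U$, hence bounded above on the relatively compact set $\overline{V'}$. In the chosen coordinates the frame $e$ of $mK_{X/Y}+L$ differs from $(dw_1\wedge\cdots\wedge dw_d)\otimes f$ by a nowhere-vanishing holomorphic factor; consequently the $(d,d)$-form $|e|^2_{h_{L_1}}$ on the slice $U\cap X_t$ is a positive function times $e^{-\varphi_{L_1}}\,dV_{X_t}$, that positive function being bounded below on $\overline{V'}$ by a constant $>0$. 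These two facts give a constant $\delta=\delta(U,V,e)>0$ with
$$|e|^2_{h_{L_1}}\ \geq\ \delta\cdot dV_{X_t}\qquad \text{on } V'\cap X_t \text{ for all } t\in\pi(V).$$

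Now fix $t\in\pi(V)$ and $s\in H^0(X_t,mK_{X/Y}+L)$, which I may assume satisfies $\|s\|_h<+\infty$. Writing $s=g\cdot e$ on $U\cap X_t$ with $g$ holomorphic and using that $|s|^2_{h_{L_1}}\geq0$, the displayed bound gives
$$\int_{V'\cap X_t}|g|^2\,dV_{X_t}\ \leq\ \delta^{-1}\int_{V'\cap X_t}|s|^2_{h_{L_1}}\ \leq\ \delta^{-1}\int_{X_t}|s|^2_{h_{L_1}}\ =\ \delta^{-1}\|s\|_h^2 .$$
Since $|g|^2$ is plurisubharmonic, applying the sub-mean value inequality over balls of a fixed radius $r_0=r_0(V,V')>0$ centred at the points of the slice $V\cap X_t$ — such balls remaining inside $V'\cap X_t$ — yields $\|g\|^2_{C^0(V\cap X_t)}\leq C_d\,r_0^{-2d}\int_{V'\cap X_t}|g|^2\,dV_{X_t}$, where $C_d$ depends only on $d$. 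As $\|s/e\|_{C^0(V\cap\pi^{-1}(t))}=\|g\|_{C^0(V\cap X_t)}$, combining the last two estimates proves the proposition with $C(U,V,e)^2:=C_d\,r_0^{-2d}\,\delta^{-1}$, a quantity that depends only on $U$, $V$, $e$ (and on the fixed data $\pi$, $h_{L_1}$), and not on $t$ or $s$.

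The only delicate point — the main obstacle — is the uniformity of $\delta$ and $r_0$ in $t$: it is precisely the plurisubharmonicity of the weight of $h_{L_1}$ on the total space, \eqref{postivcur}, that makes $\sup_{\overline{V'}}\varphi_{L_1}$ finite and independent of the fibre, and the choice of $U$ relatively compact inside a chart adapted to $\pi$ that keeps the slice geometry and the radius $r_0$ uniform. Alternatively, and this is the route behind the attribution to \cite[A.2]{BP10}, one may first extend $s$ from the smooth hypersurface $U\cap X_t$ to a holomorphic section $S$ over the Stein open set $U$ with $\int_U|S|^2_{h_{L_1}}\leq C_0(U)\|s\|_h^2$ by the Ohsawa--Takegoshi extension theorem (cf.\ \cite[Chapter 13]{Dem12}) — the extension constant being uniform in $t$ because the slices $U\cap X_t$ are parallel and the weight $\varphi_{L_1}$ is psh — and then apply the sub-mean value inequality to $S/e$ on $V\Subset U$.
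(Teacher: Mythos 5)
Your main argument has a genuine gap, and it occurs at the very first reduction: ``I may assume $U\subseteq\pi^{-1}(Y_0)$.'' The whole point of this proposition --- and the way it is used in Case~3 of the main theorem --- is that $U$ is an \emph{arbitrary} small Stein open subset of $X$, in particular one centred on a singular fibre of $p$, and that the constant $C(U,V,e)$ is uniform as $t$ ranges over $\pi(V)$, including points arbitrarily close to the discriminant. (That uniformity is what lets one conclude that $\widetilde l$ is bounded on $V\cap p^{-1}(Y_1\setminus\cup C_i)$ and hence extends across the bad locus.) Once you restrict $U$ to lie over the smooth locus of $\pi$, that uniformity is exactly what you give up: over a neighbourhood of a singular fibre there is no product chart, the slices $V\cap X_t$ do not identify with a fixed ball in $\bC^d$, the ``obvious identification'' of the measures $dV_{X_t}$ does not exist, the comparison constant $\delta$ between $|e|^2_{h_{L_1}}$ and a fixed fibrewise volume form degenerates (the relative canonical frame blows up or vanishes against any fixed fibrewise measure as the fibre degenerates), and there is no uniform radius $r_0$ for the fibrewise sub-mean-value inequality. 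So the fibrewise localization cannot be patched by a finite covering of $\overline V$.

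The correct route is the one you relegate to your last sentence, and it is precisely the paper's proof: since $U$ is small Stein, choose a Stein open $B\subset Y$ with $U\subset p^{-1}(B)$, use Ohsawa--Takegoshi to extend $s$ from $X_t$ to a section $\widetilde s$ of $K_X+L_1$ on all of $p^{-1}(B)$ with $\int_{p^{-1}(B)}|\widetilde s|^2_{h_{L_1}}\le C\,\|s\|_h^2$ (the constant is uniform in $t$ because $X_t$ is the zero set of the bounded map $p-t$ on $p^{-1}(B)$, not because the fibres are ``parallel''), and then apply the sub-mean-value inequality to $\widetilde s/(e\wedge p^\star e_B)$ on balls in the \emph{ambient} space $V\Subset p^{-1}(B)$, where the geometry is manifestly uniform; restricting to $X_t$ gives the claim. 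The essential difference from your main route is that the $L^2$-to-$C^0$ step is performed in the total space rather than fibre by fibre, which is what makes the estimate survive the degeneration of the fibres. As written, your primary argument does not prove the statement in the generality in which it is needed.
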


\begin{proof}
As explained in Remark \ref{hermtimetricconstr}, the line bundle $L_1 := (m-1) K_{X/Y}+L$ can be equipped with a possibly singular metric $h_{L_1}$ such that 
$$i\Theta_{h_{L_1}} (L_1) \geq 0\qquad\text{on } X .$$
Since $U$ is a small open set, we can find a Stein open set $B \subset Y$ such that $U\subset p^{-1} (B)$.
As $m K_{X/Y} +L = K_{X/Y}+L_1$, by applying the Ohsawa-Takegoshi extension theorem to the fibration
$p^{-1} (B) \rightarrow B$, we can find a $\widetilde{s} \in H^0 (p^{-1}(B), K_X +L_1)$
such that 
\begin{equation}\label{OText}
\int_{p^{-1} (B)} |\widetilde{s}|_{h_{L_1}} ^2 \leq C \int_{X_t} |s|_{h_{L_1}} ^2  =C \cdot \|s\|_h ^2 
\end{equation}
and 
\begin{equation}\label{1extension}
\widetilde{s} |_{X_t} = s \wedge p^\star (e_{B})  ,
\end{equation}
where $e_B$ is a basis of $K_Y$ over $B$.

\medskip

On the open set $U$, $\widetilde{s}$ can be written as $\widetilde{s}= \widetilde{w} \cdot e \wedge p^\star (e_B)$ 
for some holomorphic function $\widetilde{w} $ on $U$.
Note that $V \Subset p^{-1} (B)$, \eqref{OText} implies thus that
$$\|\widetilde{w}\|_{C^0 (V)} \leq C (U, V, e) \cdot \|s\|_h $$
for some constant $C (U,V,e)$ depending only on $U, V$ and $e$. 
Thanks to \eqref{1extension}, we have $\widetilde{w} |_{X_t} = \frac{s}{e}$. Therefore
\begin{equation}
\|\frac{s}{e}\|_{C^0 (V \cap X_t)} \leq C (U, V, e) \cdot \|s\|_h .
\end{equation} 
The proposition is proved.
\end{proof}

\noindent The last result of this section concerns the regularity of the metric $h$.

\begin{proposition}\cite[Cor 2.8]{CP17}\label{maintool2}
Let $E \rightarrow X$ be a holomorphic vector bundle on a manifold $X$ (which is not necessary compact).
Let $h_E$ be a possibly singular hermitian metric on $E$ such that $(E, h_E)$ is positively curved.
Let $U$ be a topological open set of $X$.
If 
$$i\Theta_{\det h_E} (\det E) \equiv 0 \qquad\text{on }U,$$ 
then $h_E$ is a smooth metric on $E |_U $, and $(E |_U , h_E)$ is hermitian flat. 
\end{proposition}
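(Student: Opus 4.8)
The plan is to pass to the dual bundle and then exploit the flatness of the determinant to prove that, \emph{locally on $U$}, the metric is simultaneously positively and negatively curved, after which smoothness and Hermitian flatness follow. Since $(E,h_E)$ is positively curved, the pair $(V,g):=(E^\star,h_E^\star)$ is negatively curved; smoothness and Hermitian flatness are preserved under taking duals, and $i\Theta_{\det h_E}(\det E)=-\,i\Theta_{\det g}(\det V)$. So it suffices to treat a negatively curved $(V,g)$ with $i\Theta_{\det g}(\det V)\equiv 0$ on $U$. The statement being local, fix a small ball $U'\subset U$ with a holomorphic frame $(\epsilon_i)_{1\le i\le r}$ of $V$ and set $G=(G_{i\bar j})$, $G_{i\bar j}:=\langle\epsilon_i,\epsilon_j\rangle_g$. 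Vanishing of the curvature of $\det g$ on $U'$ means $\log\det G$ is pluriharmonic there; writing it as the real part of a holomorphic function and absorbing that function into $\epsilon_1$, we may assume $\det G\equiv 1$ on $U'$. Now comes the key step: I would use the fact, valid for singular hermitian bundles and of the same nature as the statement (used elsewhere in the paper via \cite{PT14}) that the determinant of a positively curved bundle is pseudo-effective, that the exterior powers of a negatively curved bundle are negatively curved; in particular $\Lambda^{r-1}V$ with its induced metric is negatively curved. Let $\omega:=\epsilon_1\wedge\cdots\wedge\epsilon_r$, a holomorphic frame of $\det V$ over $U'$ with $|\omega|^2_{\det g}=\det G\equiv 1$. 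For a local holomorphic section $v$ of $V^\star$, the contraction $\iota_v\omega$ is a holomorphic section of $\Lambda^{r-1}V$, and evaluating in a $g$-orthonormal basis of $V$ gives $|\iota_v\omega|^2_{\Lambda^{r-1}g}=|\omega|^2_{\det g}\cdot|v|^2_{g^\star}=|v|^2_{g^\star}$ on $U'$. Hence $\log|v|^2_{g^\star}$ is plurisubharmonic on $U'$ for every such $v$, i.e.\ $(V^\star,g^\star)$ is negatively curved on $U'$, which by definition means $(V,g)$ is \emph{positively} curved on $U'$ as well.

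It remains to deduce, from the fact that $(V,g)$ is both positively and negatively curved on $U'$, that $g$ is smooth and Hermitian flat there. Applying the definition to $\epsilon_i$ and to $\epsilon_i^\star$ shows that $\log G_{i\bar i}$ and $\log(G^{-1})_{i\bar i}$ are plurisubharmonic, hence locally bounded above; combined with $G_{i\bar i}(G^{-1})_{i\bar i}\ge 1$ and $|G_{i\bar j}|^2\le G_{i\bar i}G_{j\bar j}$ (Cauchy--Schwarz), this makes $G$ locally bounded with $G^{-1}$ locally bounded, so $g$ is a bounded, uniformly elliptic singular metric on $U'$. The main obstacle is to upgrade this two-sided bound to genuine smoothness of $g$: morally, a singular hermitian metric that is at once Griffiths $\ge 0$ and $\le 0$ must be smooth, but — as Raufi's examples show — there is no curvature-preserving regularization for singular hermitian metrics on vector bundles in general, so one has to use the two-sidedness and uniform ellipticity, for instance through a careful mollification-and-comparison argument, or by restricting to generic complex lines to reduce to the rank-one and curve cases and then bootstrapping. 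Alternatively, and perhaps more efficiently, one can sidestep smoothness as a separate issue by constructing directly a holomorphic $g$-orthonormal frame on $U'$ via $\bar\partial$-estimates that exploit the negativity; producing such a frame yields smoothness and Hermitian flatness simultaneously. I expect this regularity/construction step to be the genuinely delicate point of the proof.

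Granting smoothness, the conclusion is immediate. A smooth metric $g$ on $U'$ whose Griffiths curvature form satisfies $\langle i\Theta_g(V)(\xi,\bar\xi)w,w\rangle_g\ge 0$ and $\le 0$ for all $\xi\in T_xX$, $w\in V_x$ must have $i\Theta_g(V)\equiv 0$: for each $\xi$ the Hermitian form $w\mapsto\langle i\Theta_g(V)(\xi,\bar\xi)w,w\rangle_g$ is both positive and negative semidefinite, hence zero, so $i\Theta_g(V)(\xi,\bar\xi)=0$ as an endomorphism, and polarizing in $\xi$ gives $i\Theta_g(V)=0$ on $U'$. Thus the Chern connection of $(V,g)|_{U'}$ is flat, i.e.\ $(V,g)|_{U'}$ is Hermitian flat; dualizing, $(E,h_E)|_{U'}$ is smooth and Hermitian flat, and since $U$ is covered by balls $U'$ of this kind, the proposition follows.
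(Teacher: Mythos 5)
First, a remark on the comparison you were asked to make: the paper offers no proof of this statement at all — it is quoted verbatim as \cite[Cor 2.8]{CP17} — so the only meaningful benchmark is the argument in that reference. Measured against it, the first half of your proposal is exactly right and is indeed how the proof begins: the isomorphism $E^\star\cong \Lambda^{r-1}E\otimes(\det E)^{-1}$ (equivalently, your contraction $v\mapsto\iota_v\omega$ after normalising $\det G\equiv 1$, which is correctly computed), together with the fact that exterior powers of a negatively curved singular metric are negatively curved and that $\det h_E$ is flat on $U$, shows that $(E,h_E)$ is simultaneously positively and negatively curved on $U$, whence $H$ and $H^{-1}$ are locally bounded. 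Your final paragraph (smooth plus two-sided Griffiths bounds implies flat) is also fine.

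The genuine gap is the one you flag yourself: passing from ``two-sided Griffiths curvature bounds with $H,H^{-1}\in L^\infty_{loc}$'' to smoothness. None of the three routes you sketch is carried out, and the first (mollification and comparison) is undermined by the very obstruction you quote, namely Raufi's examples showing that no curvature-preserving regularisation exists for singular metrics on bundles of rank at least two. The missing ingredient is Raufi's existence theorem for the curvature current \cite{Rau15}: once $\log\det H$ is pluriharmonic — so $\det H$ is locally bounded above and away from zero — the curvature of $(E,h_E)$ exists as a matrix of currents of order zero which is Griffiths semipositive; its trace is $i\Theta_{\det h_E}(\det E)\equiv 0$ on $U$, and a Griffiths-semipositive matrix-valued measure with vanishing trace is identically zero. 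The resulting equation $\bar\partial\bigl(H^{-1}\partial H\bigr)=0$ says the connection form is holomorphic in the sense of distributions, and smoothness of $H$ follows by ellipticity and bootstrapping; flatness is then your pointwise argument. Without this step, or an honest substitute for it, the proof is incomplete precisely at the point you yourself identify as delicate.
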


\section{Proof of the main theorem}

\noindent We now prove the main theorem of the article.

\begin{theorem}[=Theorem \ref{main}]
Let $p: X\rightarrow Y$ be a fibration between two projective manifolds. Let $F$ be the generic fiber and let $\Delta$ be a $\bQ$-effective klt divisor on $X$. 
Set $\Delta_F := \Delta |_F$.
If $\dim Y \leq 2$, then 
\begin{equation}\label{iitakaineq}
\kappa (K_X +\Delta) \geq \kappa (K_F +\Delta_F) +\kappa (Y) . 
\end{equation}
\end{theorem}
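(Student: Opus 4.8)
The plan is to reduce to the case where $K_Y$ is nef by running the minimal model program on $Y$ (possible since $\dim Y \le 2$) and to proceed by case analysis on $\kappa(Y)$.

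\medskip

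\noindent\textbf{Reduction and the cases $\kappa(Y)\ge 1$ and $\kappa(Y)=-\infty$.}
First I would replace $Y$ by a minimal model so that $K_Y$ is nef; this does not change $\kappa(Y)$, and the birational modifications of $Y$ can be lifted to birational modifications of $X$ (resolving the fiber product), which change neither $\kappa(K_X+\Delta)$ nor $\kappa(K_F+\Delta_F)$. If $\kappa(Y)\ge 1$, the inequality \eqref{iitakaineq} follows directly from Kawamata's theorem \cite{Kaw82a}, which handles fibrations over bases of positive Kodaira dimension (in the klt form, one can quote the more recent literature). If $\kappa(Y)=-\infty$, then $\kappa(Y)$ is $-\infty$ and the right-hand side of \eqref{iitakaineq} is $-\infty$, so the inequality is trivial. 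Hence the crux is $\kappa(Y)=0$, i.e. (after the finite étale cover given by the Enriques--Kodaira classification) $Y$ is a complex torus or a K3 surface; a finite étale base change is harmless since it only multiplies the relevant plurigenera. For the torus case I would cite \cite{CH11} (absolute case $\Delta=0$) and \cite{CP17} (klt pair case). So everything reduces to $Y$ a K3 surface.

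\medskip

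\noindent\textbf{The K3 case.}
Here I would use positivity of direct images. Fix $m$ divisible enough so that $\kappa(K_F+\Delta_F)$ is computed by $h^0(F, m(K_F+\Delta_F))$ and so that $m\Delta$ is integral; set $L := m\Delta$ equipped with the singular metric with curvature $\ge 0$ coming from effectivity, with trivial multiplier ideal on the generic fiber (klt hypothesis). By Theorem \ref{maintool}, the torsion-free sheaf $\sE := p_\star(mK_{X/Y}+L)$ carries a positively curved singular metric $h$ on its locally free locus, and $\det\sE$ is pseudo-effective. Note $\rk\sE = h^0(F, m(K_F+\Delta_F)) =: r$; if $r=0$ there is nothing to prove, so assume $r\ge 1$. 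By Proposition \ref{keyprop}, $\det\sE$ being pseudo-effective on the K3 $Y$ forces it to be $\bQ$-effective, and since on a K3 numerical and Iitaka dimension agree for pseudo-effective line bundles, $\nd(\det\sE)=\kappa(\det\sE)\in\{0,1,2\}$.

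\smallskip

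\noindent If $\nd(\det\sE)\ge 1$: then $\kappa(\det\sE)\ge 1$, and since $mK_{X/Y}+L$ has a section on the generic fiber, a standard argument (as in Viehweg, using that $\det\sE\hookrightarrow$ a suitable symmetric/tensor power, or simply $\kappa(K_X+\Delta)\ge\kappa(p^\star\det\sE^{1/r}) + \kappa(K_F+\Delta_F) - $ slack, made precise via the inclusion $\cO_Y(\det\sE)\otimes(\text{stuff})\hookrightarrow p_\star(\cdots)$) gives $\kappa(K_X+\Delta)\ge \kappa(Y)+\kappa(K_F+\Delta_F)$, using $\kappa(Y)=0$. Concretely I would combine the $\bQ$-effectivity of $\det\sE$ with the fact that a section of $\det\sE$ produces, after passing to the $r$-th tensor power and using the fiberwise sections, enough sections of $m'(K_X+\Delta)$ to conclude; if $\nd=1$ Remark \ref{semiample} says $\det\sE$ (or its nef part) induces an elliptic fibration $Y\to\bP^1$, which gives an even more hands-on argument via restriction to fibers.

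\smallskip

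\noindent If $\nd(\det\sE)=0$: then $i\Theta_{\det h}(\det\sE)$ is, up to the current of integration on the negative part of the Zariski decomposition, identically zero; more precisely the Zariski decomposition of the pseudo-effective $\det\sE$ is $\det\sE\equiv_\bQ\sum a_i[C_i]$ with the $C_i$ negative curves and nef part zero. Off $\cup C_i$ we have $i\Theta_{\det h}(\det\sE)\equiv 0$, so by Proposition \ref{maintool2} the metric $h$ is smooth and hermitian flat on $Y\setminus(\cup C_i)$ (after also removing the non-locally-free locus of $\sE$, which lies in the $C_i$ or can be absorbed by shrinking, using \cite{HPS16, CP17} to control it). Thus $(\sE,h)|_{Y\setminus(\cup C_i)}$ is a flat bundle, given by a unitary representation $\rho:\pi_1(Y\setminus(\cup C_i))\to U(r)$. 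By Proposition \ref{uniformation}, $\pi_1(Y\setminus(\cup C_i))$ is almost-abelian; if it is finite, a finite étale cover trivializes $\sE$ there, it extends across the $C_i$ (the complement of the $C_i$ in $Y$ has complement of codimension $\ge 2$ after the torus cover, or one argues directly with Hartogs), one gets a genuine section of $\sE$ hence of $mK_{X/Y}+L$ over all of $Y$, and since $p^\star$ of such a section plus the fiberwise sections give $h^0(X, m(K_X+\Delta))\ge r$, and the same works for all large multiples of $m$ with the rank growing like the fiber plurigenera, one deduces $\kappa(K_X+\Delta)\ge\kappa(K_F+\Delta_F) = \kappa(K_F+\Delta_F)+\kappa(Y)$. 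If $\pi_1$ is infinite, use the finite orbifold cover $\sigma:T\to Y_{\can}$ by a torus from Proposition \ref{uniformation}: pull everything back to (a resolution of) $T\times_{Y_{\can}}Y$, reducing to the torus case already handled by \cite{CH11, CP17}; the ramification only over the points $p_i$ (codimension $2$) means the relevant sections extend.

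\medskip

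\noindent\textbf{Main obstacle.}
I expect the hard part to be the flat case $\nd(\det\sE)=0$ with $\pi_1$ infinite: one must convert the unitary local system $(\sE,h)$ on the open K3 into actual global sections of $mK_{X/Y}+m\Delta$ upstairs. The mechanism is \emph{parallel transport} --- a flat section along loops gives multivalued sections, and one needs the almost-abelian (virtually $\bZ^{\le 4}$, here $\bZ^{\le 2}$ after passing to the torus cover) structure of the fundamental group to produce enough monodromy-invariant combinations, then extend them across the exceptional curves using that they are contracted in $Y_{\can}$ and the uniform $C^0$-estimate of Proposition \ref{uniformbound} to bound the extended sections, and finally descend via \cite{CP17}'s analysis together with the extension Proposition \ref{specialOT}. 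Making the bookkeeping of the orbifold cover, the codimension-$2$ ramification locus, and the compatibility of $h$ with parallel transport precise is the technical heart; the rest is a relatively standard application of $\bQ$-effectivity of $\det\sE$ and the easy inequality $\kappa(K_X+\Delta)\ge \kappa(K_F+\Delta_F)+\kappa(\det\sE)$ modulo the rank factor.
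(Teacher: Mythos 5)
Your overall strategy is the same as the paper's: reduce to $K_Y$ nef, dispose of $\kappa(Y)\geq 1$ via Kawamata and of $\kappa(Y)=0$ with $Y$ a torus via \cite{CH11,CP17}, and for $Y$ a K3 surface run a trichotomy on the numerical dimension of (the nef part of) $\det p_\star(mK_{X/Y}+m\Delta)$, with the flat case handled by Proposition \ref{maintool2}, the orbifold uniformization of $Y_{\can}$, parallel transport, and the uniform $C^0$-estimate. The flat case ($\nd=0$) and the big case ($\nd=2$) are in essence correctly sketched.

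There is, however, a genuine gap in your treatment of the intermediate case $\nd=1$. The ``standard argument'' you invoke --- producing sections of $m'(K_X+\Delta)$ from a section of a power of $\det\sE$ ``after passing to the $r$-th tensor power and using the fiberwise sections'' --- does not exist in this form: the natural multiplication map $\otimes^r p_\star(\sL)\to p_\star(\sL^{\otimes r})$ is symmetric and therefore annihilates $\wedge^r\sE=\det\sE$, so a section of $\det\sE$ does not directly yield a section of $p_\star(mr(K_{X/Y}+m\Delta))$. Viehweg's determinant trick requires fiber products and only gives the clean conclusion when $\det\sE$ is \emph{big}; when $\kappa(\det\sE)=1$ no such shortcut is available. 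The paper has to work hard precisely here: it uses the elliptic fibration $\pi:Y\to\bP^1$ induced by the nef part $L_m$, proves a separate Claim (via a volume estimate and $L_m\cdot\det p_\star(mK_{X/Y}+m\Delta)=0$) that $\det p'_\star(mK_{X'/Y'}+m\Delta')-c(\pi_Y\circ\pi)^\star\mathcal{O}_{\bP^1}(1)$ is pseudo-effective, feeds this into \cite[Thm 3.4]{CP17} to obtain a pseudo-effective class of the form $K_{X'/Y'}+\Delta'+E-\epsilon(\cdots)$, and then runs a further dichotomy on whether the curvature of the resulting determinant vanishes on the generic fiber of $Y'\to\bP^1$, using the Ohsawa--Takegoshi extension (Proposition \ref{specialOT}) in the flat sub-case. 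None of this is in your proposal, and without it the case $\nd=1$ is unproved. A second, more minor, point: in the finite-$\pi_1$ sub-case the monodromy-invariant sections $\prod_a\rho(a)(f)$ do not form a linear system of dimension $h^0(F,mr(K_F+\Delta_F))$, since $f\mapsto\prod_a\rho(a)(f)$ is not linear; one only gets $\kappa(K_X+\Delta)\geq\min\{1,\kappa(K_F+\Delta_F)\}$ and must then invoke the easy-addition argument via the Iitaka fibration (Proposition \ref{simplecmn}) to conclude, rather than the rank-growth count you suggest.
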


\begin{proof}
Since $Y$ is of dimension $2$, we can consider its minimal model and assume that $Y$ is a smooth projective surface with nef canonical bundle. We show next that it will be enough to treat the case where $Y$ is a K3 surface.

Indeed, if $\kappa (Y) \geq 1$, as the klt version of $C_{n,1}$ is known (cf. \cite{Kaw82a, CP17}), 
we have thus \eqref{iitakaineq}. We refer to Proposition \ref{detailproof} in the appendix for a detailed proof.
If $\kappa (Y)=0$, by using the classification of minimal surface \cite[Thm 1.1]{BHPV}, 
we have $c_1 (Y)=0 \in H^{1,1} _{\bQ} (Y)$.
After a finite \'{e}tale cover\footnote{We remark that \eqref{iitakaineq} is invariant under this operation.}, 
the base $Y$ is either a torus or a K3 surface. If $Y$ is a torus, \cite[Thm 1.1]{CP17} implies \eqref{iitakaineq}.  
We assume in this way for the rest of our proof that \textit{$Y$ is a K3 surface}.  

\medskip

Let $m\in \bN$ be sufficiently divisible and let $Y_1$ be the locally free locus of the direct image sheaf $p_\star (m K_{X/Y}+m\Delta)$.
By using Theorem \ref{maintool}, there exists a possibly singular hermitian metric $h$ on 
$(p_\star (m K_{X/Y}+m\Delta), Y_1)$ such that 
\begin{equation}\label{griffithssemipos}
i\Theta_h (p_\star (m K_{X/Y}+m\Delta)) \succeq 0 \qquad\text{on }Y_1 ,
\end{equation}
and $h$ induces a hermitian metric $\det h$ on $(\det p_\star (m K_{X/Y}+m\Delta), Y)$ such that 
$$ i\Theta_{\det h} (\det p_\star (m K_{X/Y}+m\Delta)) \geq 0 \qquad\text{on }Y$$
in the sense of current. In particular, the bundle $\det p_\star (m K_{X/Y} +m \Delta)$ is pseudo-effective.

By Proposition \ref{keyprop}, we have a Zariski decomposition
\begin{equation}\label{Zariskidecomp}
\Det p_\star (m K_{X/Y}+m\Delta) \equiv_\bQ \sum_{i=1}^s  a_i [C_i] +L_m , 
\end{equation}
where $a_i \in \bQ^+$, $[C_i]$ are negative intersection curves, $L_m$ is nef and $L_m \cdot C_i =0$ for every $i$.
Let $\nd (L_m)$ be the numerical dimension of $L_m$. We will distinguish next among three cases, according to the 
numerical dimension of $L_m$.

\bigskip

{\em Case 1: The numerical dimension of $L_m$ equals two} 

Then we infer that the bundle $\det p_\star (m K_{X/Y}+m\Delta) $ is big on $Y$, and \eqref{iitakaineq} is thus proved by using \cite{Cam04b} or \cite[Thm 5.1]{CP17}.  

\bigskip

{\em Case 2: The numerical dimension of $L_m$ equals one}

Thanks to Remark \ref{semiample}, $L_m$ is semiample. Then $L_m$ induces a fibration $\pi : Y\rightarrow \bP^1$.
As $L_m \cdot [C_i] =0$ for every $i$, we have
\begin{equation}\label{inter0}
L_m \cdot  \det p_\star (m K_{X/Y}+m\Delta) =0 .
\end{equation}
By using \cite[Lemma 7.3]{Vie83}, 
we can find a birational morphism $Y'\rightarrow Y$ from a projective manifold $Y'$, and a desingularisation $X'$ of $Y'\times_Y X$ satisfying:
\begin{equation*} 
\begin{CD}
    X^\prime @>{\pi_X}>> X \\
    @Vp^\prime VV      @VVpV  \\ 
    Y^\prime @>>{\pi_Y}>   Y\\
          @.  @VV{\pi}V \\
          @.  \bP^1 
\end{CD}
\end{equation*} 
each divisor $W\subset X'$ such that $\codim_{Y'} p' (W) \geq 2$ is $\pi_X$-contractible.
Since $\Delta$ is klt, we can find a klt $\bQ$-effective divisor $\Delta'$ on $X^\prime$ and
some effective $\pi_X$-exceptional divisor $D'$ such that
\begin{equation}\label{klttransform}
\pi_X ^\star (K_X +\Delta) + D' =K_{X^\prime} + \Delta^\prime . 
\end{equation}
\smallskip

\noindent {\bf Claim.} The bundle 
$$\Det p' _\star (m K_{X'/Y'} +m \Delta') - c(\pi_Y \circ\pi) ^\star \mathcal{O}_{\bP^1} (1)$$ 
is pseudo-effective on $Y'$
for some constant $c>0$.  
\smallskip

\noindent We will verify this claim later; for now we finish the proof of the theorem.
By using \cite[Thm 3.4]{CP17}, the claim implies the existence of a divisor $E\subset X'$ 
such that $\codim_{Y'} p' (E) \geq 2$ and  
\begin{equation}\label{pseudo-effective}
D:= K_{X'/Y'}+\Delta' +E - \epsilon (p'\circ \pi_Y \circ \pi) ^\star \mathcal{O}_{\bP^1} (1)   
\end{equation}
is $\mathbb{Q}$-pseudo-effective on $X'$ for some $\epsilon >0$.

\medskip

Let $m_1 \gg m_2 \gg 1$. Thanks to \eqref{pseudo-effective}, we have
\begin{equation}\label{keydecom}
(m_1 +m_2) (K_{X'/Y'}+\Delta' +E )
\end{equation}
$$ = m_1 (K_{X'/Y'}+\Delta' + \frac{m_2}{m_1} D +E ) + \ep m_2 (p'\circ \pi_Y \circ \pi) ^\star \mathcal{O}_{\bP^1} (1) . $$
As $m_1 \gg m_2$, we can apply Theorem \ref{maintool} to $m_1 (K_{X'/Y'}+\Delta' + \frac{m_2}{m_1} D +E )$.
In particular, we can find a possibly singular metric $h_{m_1}$ 
on 
$$\sV_1 :=p' _\star (m_1 (K_{X'/Y'}+\Delta' + \frac{m_2}{m_1} D +E ) )$$ 
such that $i\Theta_{h_{m_1}} (\sV_1) \succeq 0$.
Set $T := i\Theta_{\det h_{m_1}} (\det \sV_1)$. Then $T \geq 0$ in the sense of current.
Let $Y' _t$ be a generic fiber of $\pi_Y \circ \pi$.

\medskip

If $T |_{Y' _t}$ is not identically $0$, as $Y' _t$ is of dimension $1$, $T |_{Y' _t}$ is strictly positive at a generic point of $Y' _t$.
Together this with \eqref{keydecom}, $\det p' _\star ((m_1 +m_2) (K_{X'/Y'}+\Delta' +E ))$ is big on $Y'$.
By applying \cite{CP17}, we get
\begin{equation}\label{withe1}
\kappa (X' , K_{X'}+\Delta' +E) \geq \kappa (F, K_F +\Delta_F) +2 . 
\end{equation}
As $E$ and $D'$ are $\pi_X$-contractible, \eqref{klttransform} and \eqref{withe1} imply \eqref{iitakaineq}.

\smallskip

If $T |_{Y' _t} \equiv 0$, thanks to Proposition \ref{maintool2}, 
$(\sV_1 |_{Y'_t} , h_{m_1})$ is hermitian flat on $Y' _t$.
In particular, $h_{m_1} |_{Y' _t}$ is a smooth metric.
Note that $H^0 (Y',  K_{Y'})$ is of dimension $1$. It defines a canonical metric $h_{Y'}$ on $K_{Y'}$ and the restriction of $h_{Y'}$
on $Y' _t$ is smooth.
As a consequence, we have
$$
\int_{Y' _t}  |s|_{(m_1 -1) h_{Y'} , h_{m_1}} ^2 < +\infty \qquad\text{for every } s\in H^0 (Y' _t,  K_{Y'}\otimes (m_1 -1) K_{Y'} \otimes\sV_1).
$$
Combining this with Proposition \ref{specialOT} \footnote{We take $H = (m_1 -1) K_{Y'}$ and $h_H = (m_1 -1) h_{Y'}$.}, we get
$$h^0 (Y',  K_{Y'} \otimes (m_1 -1) K_{Y'} \otimes \sV_1 \otimes \ep m_2 (\pi_Y \circ \pi) ^\star \mathcal{O}_{\bP^1} (1))$$
$$\geq h^0 (Y' _t, 
 K_{Y'} \otimes (m_1 -1) K_{Y'} \otimes \sV_1) = h^0(X' _t,m_1 (K_{X' _t}+\Delta' + \frac{m_2}{m_1} D +E )) .$$
Together with \eqref{keydecom}, we obtain 
\begin{equation}\label{keyineq}
h^0 (X', 
m_1 \cdot (p')^\star K_{Y'} +(m_1 + m_2) (K_{X'/Y'}+\Delta' +E)) \geq  h^0(X' _t,m_1 (K_{X' _t}+\Delta' + \frac{m_2}{m_1} D +E )) .
\end{equation}

\medskip

Finaly, by applying \cite{Kaw82a,CP17} to $X'_t \rightarrow Y' _t$, we have 
$$\kappa (X' _t, K_{X' _t}+\Delta' + \frac{m_2}{m_1} D + E) \geq \kappa (F, K_F +\Delta_F) .$$
Together with \eqref{keyineq} and the fact that $K_{Y'}$ is $\mathbb{Q}$-effective, we obtain
\begin{equation}\label{withE}
\kappa (X' , K_{X'}+\Delta' + E) \geq \kappa (F ,K_F +\Delta_F) .
\end{equation}
As $E$ and $D'$ are $\pi_X$-contractible, \eqref{klttransform} and \eqref{withE} imply \eqref{iitakaineq}.

\bigskip

{\em Case 3: The numerical dimension of $L_m$ equals zero}

Then $L_m$ is trivial (as it is semiample) and we have
\begin{equation}\label{effectiveness}
\Det p_\star (m K_{X/Y}) \equiv_\bQ \sum_{i=1}^s  a_i [C_i]  
\end{equation}
where $[C_i]$ are negative curves. As $ i\Theta_{\det h} (\det p_\star (m K_{X/Y}+m\Delta))$ is a positive current in the same class of
$\sum\limits_{i=1}^s  a_i [C_i] $, we get
$$i\Theta_{\det h} (\det p_\star (m K_{X/Y}+m\Delta)) = \sum_{i=1}^s  a_i [C_i]  \qquad\text{on }Y $$
in the sense of current. In particular, we have
$$i\Theta_{\det h} (\det p_\star (m K_{X/Y}+m\Delta)) \equiv 0 \qquad\text{on }Y \setminus (\cup C_i) .$$
By using Proposition \ref{maintool2}, $(p_\star (m K_{X/Y}+m\Delta), h)$ is hermitian flat on $Y_1 \setminus (\cup C_i)$.
\medskip

Let $:\tau: Y\rightarrow Y_{\can}$ be the morphism which contracts the negative curves $\cup C_i$. 
There are two possible cases: 
$\pi_1 (Y\setminus (\cup C_i))$ is finite or infinite. We will analyze each possibility.

\subsubsection{The fundamental group $\pi_1 \big(Y\setminus (\cup C_i)\big)$ is finite.} As $\codim_Y (Y\setminus Y_1) \geq 2$, 
we know that $\pi_1 (Y_1\setminus (\cup C_i)) =\pi_1 (Y \setminus (\cup C_i))$ is finite.
Let $r$ be the number of elements of the finite group $\pi_1 (Y_1\setminus (\cup C_i))$.
Fix a generic point $y\in Y_1\setminus (\cup C_i)$.
As the direct image vector bundle $(p_\star (m K_{X/Y}+m\Delta), h)$ is hermitian flat on $Y_1\setminus (\cup C_i)$, the parallel transport induces a representation
\begin{equation}
\rho: \pi_1 (Y_1 \setminus (\cup C_i)) \rightarrow  \Aut (H^0 (X_y , m K_{X/Y}+m \Delta)) .
\end{equation}
Let $f\in H^0 (X_y , m K_{X/Y}+m \Delta)$ be an element with unit norm. Although the parallel transport of $f$ cannot induce a global section over $Y_1\setminus (\cup C_i)$, 
the corresponding parallel transport of 
$$\prod_{a \in\pi_1 (Y_1\setminus (\cup C_i))} \rho (a) (f) \in H^0 (X_y ,  mr (K_{X/Y}+ \Delta))$$
induces a section $\widetilde{f} \in H^0 (p^{-1}(Y_1 \setminus (\cup C_i)), mr (K_{X/Y}+\Delta))$.

\medskip

We now prove that $\widetilde{f}$ can be extended to the total space $X$.
Let $U$ be an arbitrary small Stein open subset of $X$ and $V\Subset U$ be some arbitrary open set with compact support in $U$. 
Let $e$ be  a basis of $m K_{X/Y}+ m \Delta$ on $U$.
We have 
$\widetilde{f} = \widetilde{l} \cdot e^{\otimes r}$ for some holomorphic function 
$$\widetilde{l} \in H^0 (V \cap p^{-1} (Y_1\setminus (\cup C_i)), \mathcal{O}_{V \cap p^{-1} (Y_1\setminus (\cup C_i))}) .$$
By construction, on every fiber $X_t$, $\widetilde{f} =\prod_{i=1}^r f_i$ for some $f_i \in H^0 (X_t,  m K_{X/Y}+m \Delta)$ with unit norm.
Thanks to Proposition \ref{uniformbound}, the $C^0$-norm $\|\frac{f_i}{e} \|_{C^0 (V \cap X_t)}$ is bounded by a constant $C (U,V,e)$ independent of $t$. 
Therefore 
$$\|\widetilde{l}\|_{C^0 (V \cap X_t)} = \|\prod_{i=1}^r \frac{f_i}{e} \|_{C^0 (V \cap X_t)}\leq C (U, V,e) ^r .$$ 
In particular, $|\widetilde{l}|$ is bounded on $V \cap p^{-1} (Y_1\setminus (\cup C_i))$ and
$\widetilde{f}$ can be thus extended  as a holomorphic section on $V$.
Since $V$ is an arbitrary small open set in $X$, 
$\widetilde{f}$ can be extended to the total space $X$.

\medskip

In conclusion, for any element $f\in H^0 (X_y , m K_{X/Y}+m \Delta)$, we can find a 
$$\widetilde{f} \in H^0 (X , mr (K_{X/Y}+\Delta))$$
such that $\widetilde{f}  |_{X_y} = \prod_{a \in\pi_1 (Y_1\setminus (\cup C_i))} \rho (a) (f)$.
In particular, we have 
$$\Div (\widetilde{f}  |_{X_y}) = \sum_{a \in\pi_1 (Y_1\setminus (\cup C_i))} \Div (\rho (a) (f)) .$$
Therefore, $\kappa (K_X +\Delta) \geq 1$ if $\kappa (K_F +\Delta_F) \geq 1$. In other words, we have 
$$\kappa (K_X +\Delta) \geq \min \{1, \kappa (K_F +\Delta_F) \} .$$
Together with a standard argument (cf. Proposition \ref{simplecmn} in the appendix), we get
$$\kappa (K_X +\Delta) \geq \kappa (K_F +\Delta_F)$$
and the first subcase is completely proved.

\medskip

\subsubsection{The fundamental group $\pi_1 \big(Y\setminus (\cup C_i)\big)$ is not finite.} As a consequence of Proposition \ref{uniformation}, there exists a orbifold cover from a complex torus $T$ to $Y_{\can}$:
$$\tau_Y : T \rightarrow Y_{\can} .$$
Let $X'$ be a desingularisation of $X\times_{Y_{\can}} T$. We have thus a commutative diagram
\begin{equation*} 
\begin{CD}
    X^\prime @>\tau_X >> X \\
    @Vp^\prime VV      @VVpV  \\ 
    T @>>\tau_Y >   Y_{\can}\\
\end{CD}
\end{equation*} 
Set $T_1 := \tau_Y ^{-1} (\tau (Y_1\setminus (\cup C_i) ))$, where $\tau: Y\rightarrow Y_{\can}$ is the contraction morphism.
Thanks to Proposition \ref{uniformation}, $\tau_Y$ is a non-ramified cover on $T_1$ and 
$$\codim_{T} (T\setminus T_1) \geq 2 .$$
As $\Delta$ is klt, we can find a klt $\bQ$-effective divisor $\Delta'$ on $X^\prime$ and some $\bQ$-divisor $D'$ supported in $(p')^{-1} (T\setminus T_1)$ such that
\begin{equation}\label{klttransform1}
\pi_X ^\star (K_X +\Delta) + D' =K_{X^\prime} + \Delta^\prime .
\end{equation}
Since $T$ is a torus, by applying \cite{CP17}, we have
$$\kappa (K_{X'} +\Delta') \geq \kappa (K_F +\Delta_F) .$$

Let $m\in\mathbb{N}$ be a sufficiently divisible number and let $s\in H^0 (X' , m K_{X' /T} + m \Delta')$. 
Thanks to \eqref{klttransform1} and the fact that $D'$ is supported in $(p')^{-1} (T\setminus T_1)$,
$s$ induces an element 
$$s_T\in H^0 (T_1, \tau_Y ^\star (p_\star (m K_{X/Y}+m\Delta))) .$$
Since $( p_\star (m K_{X/Y} +m\Delta), h)$ is hermitian flat on $Y_1$,
$\|s_T\|^2 _{(\pi_Y)^\star h} (t)$ is a psh function on $t\in T_1$.
As $\codim_{T} (T\setminus T_1) \geq 2$, $\|s_T\|_{\pi_Y ^\star h} (t)$ is thus constant with respect to $t\in T_1$.
Let $r$ be the degree of the cover $\tau_Y$. Since $\tau_Y$ is a non-ramified cover on $T_1$,
$s_T$ induces an element $\widetilde{s}\in H^0 (p^{-1}(Y_1 \setminus (\cup C_i)), m r K_{X/Y}+m r \Delta)$. 
As $\|s_T\|_{\tau_Y ^\star h} (t)$ is constant, by using the same argument as in the subcase 3.0.1, $\widetilde{s}$ can be extended to as an element in
$H^0 (X, m r K_{X/Y}+m r \Delta)$.
\eqref{iitakaineq} is thus proved by using the same argument as in the end of Subcase 3.0.1.
\end{proof}

\bigskip

\noindent Our next job is to establish the claim used in the proof of our main result, 
which is a consequence of the volume estimate inequality (or the holomorphic Morse inequalities).

\begin{proof}[Proof of the claim]
Thanks to \cite{PT14}, we know that $\Det p' _\star (m K_{X'/Y'} +m \Delta')$ is pseudo-effective on $Y'$.
Let $A$ be the nef part of the Zariski decomposition of $\Det p' _\star (m K_{X'/Y'} +m \Delta')$.
Set $B:= (\pi_Y \circ\pi) ^\star \mathcal{O}_{\bP^1} (1)$.
As $B$ is semiample, we have
$$ 0\leq A\cdot B \leq c_1 (\Det p' _\star (m K_{X'/Y'} +m \Delta')) \cdot c_1 (B) $$
$$=(\pi_Y)_\star (c_1 (\Det p' _\star (m K_{X'/Y'} +m \Delta'))) \cdot \pi^\star c_1 (\mathcal{O}_{\bP^1} (1))$$
$$= c_1 (\Det p_\star (m K_{X/Y} +m \Delta)) \cdot \pi^\star c_1 (\mathcal{O}_{\bP^1} (1)) =0 ,$$
where the last equality a consequence of \eqref{inter0}. Then we have
\begin{equation}\label{nulintersection}
A \cdot B  =0 .
\end{equation}

Let $L$ be an ample line bundle on $Y'$ and set $c := \frac{L\cdot A}{2 L \cdot B} \in \bQ^{+}$. 
For any $\tau \in \bQ^+$ small enough, thanks to \eqref{nulintersection} and the choice of $c$, 
the basic volume estimate (cf. for example \cite[8.4]{Dem12} or \cite[Thm 2.2.15]{Laz04}) implies that
$$\vol (A +\tau L - c B) \geq (A +\tau L)^2 -2c (A +\tau L)\cdot B $$
$$\geq 2\tau (L\cdot A - c L \cdot B) + o(\tau) > 0 .$$
Therefore $A +\tau L - c B$ is big for any $\tau \in \bQ^+$. Letting $\tau\rightarrow 0^+$, 
$A-c B$ is pseudoeffective. Then $\Det p' _\star (m K_{X'/Y'} +m \Delta') - c B$ is pseudo-effective and the claim is proved.
\end{proof}

\section{Appendix}

\noindent In this appendix, we will gather two standard results which should be well-known to experts.
\begin{proposition}\label{simplecmn}\cite{Kaw82a, CH11}
Let $p: X\rightarrow Y$ be a fibration from a $n$-dimensional projective manifold to a K3 surface, and let $\Delta$ be an effective klt $\bQ$-divisor on $X$.
Assume that Theorem \ref{main} holds for $\dim X \leq n-1$. 
If $\kappa (K_X +\Delta) \geq 1$, 
then 
$$\kappa (K_X +\Delta) \geq \kappa (K_F +\Delta_F) ,$$
where $F$ is the generic fiber of $p$ and $\Delta_F = \Delta |_F$.
\end{proposition}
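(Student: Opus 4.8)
The plan is to reduce to the case $\kappa(Y) = 0$ by an Iitaka-fibration argument applied to $K_X + \Delta$, and then to use the structure of the generic fiber of that Iitaka fibration together with the induction hypothesis. First I would handle the trivial cases: if $\kappa(K_F + \Delta_F) \le 0$ there is nothing to prove (the case $=-\infty$ is vacuous, and $=0$ holds as soon as $K_X+\Delta$ is $\bQ$-effective, which we may assume since otherwise $\kappa(K_X+\Delta) = -\infty$ forces $K_F+\Delta_F$ not $\bQ$-effective as well; this needs a brief easy-addition-type remark). So assume $k := \kappa(K_F + \Delta_F) \ge 1$ and $\kappa(K_X + \Delta) \ge 1$.

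Next I would consider the Iitaka fibration $\varphi : X' \to Z$ associated to $K_X+\Delta$ (after a birational model $X'$ of $X$, on which $\Delta$ pulls back to a klt divisor $\Delta'$ plus exceptional contributions, as in \eqref{klttransform}); here $\dim Z = \kappa(K_X + \Delta) \ge 1$. The key point is to compare the fibration $p : X \to Y$ with $\varphi$. If the composite $X' \to Z$ separates fibers of $p$ enough — precisely, if the generic fiber $F$ is not contracted by $\varphi$ — then $\kappa(K_F + \Delta_F) \le \dim \varphi(F) \le \kappa(K_X + \Delta)$ by restricting the pluricanonical system, and we are done directly, even getting $\kappa(K_X+\Delta) \ge \kappa(K_F + \Delta_F)$ without using the surface hypothesis on $Y$. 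So the real work is when $F$ is contracted by $\varphi$: then $\varphi$ factors (birationally, over an open set) through $p$, giving a fibration $g : Y \dashrightarrow Z$ with $\dim Z \ge 1$, hence $\kappa(Y) \ge \kappa(Z)$; but the generic fiber $F'$ of $X' \to Z$ is itself an algebraic fiber space over the generic fiber $Y'$ of $g$ with generic fiber $F$, and $\dim Y' = \dim Y - \dim Z \le 1$.

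At this point I would invoke the inductive/known low-dimensional cases: over a base of dimension $\le 1$ the $C_{n,m}$ conjecture for klt pairs is known (Kawamata \cite{Kaw82a}, \cite{CP17}), so applied to $F' \to Y'$ we get $\kappa(K_{F'} + \Delta'|_{F'}) \ge \kappa(K_F + \Delta_F) + \kappa(Y')$. On the other hand, since $F'$ is a generic fiber of the Iitaka fibration of $K_{X'} + \Delta'$, we have $\kappa(K_{F'} + \Delta'|_{F'}) = 0$, so necessarily $\kappa(K_F + \Delta_F) = 0$ — contradicting $k \ge 1$ — unless the contracted case simply does not arise, i.e. one concludes $F$ cannot be contracted by $\varphi$ when $k \ge 1$. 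Thus the only surviving situation is the one already settled in the first step, and $\kappa(K_X + \Delta) \ge \kappa(K_F + \Delta_F)$ follows. The main obstacle I anticipate is the careful birational bookkeeping: passing to a common model $X'$ where both $p$ and the Iitaka fibration $\varphi$ are morphisms, controlling the exceptional divisors so that $\kappa$ of the various $K + \Delta$ pairs is genuinely preserved (using that the extra divisors are $p$- or $\varphi$-exceptional), and ensuring the generic-fiber restrictions $\Delta'|_{F'}$, $\Delta'|_F$ remain klt and agree with $\Delta_F$ up to the usual adjunction corrections. The analytic inputs (positivity of direct images) are not needed here; this is a purely birational Iitaka-type argument, and its only nontrivial ingredient is the known case of $C_{n,m}$ over curves.
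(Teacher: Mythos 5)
Your second case (where the generic fiber $F$ is contracted by the Iitaka fibration $\varphi$ of $K_X+\Delta$) is handled correctly and matches what the paper does in its Cases 2 and 3; the contradiction with $\kappa(K_F+\Delta_F)\ge 1$ via $C_{n,m}$ over a base of dimension $\le 1$ is sound, granting $\kappa(Y')\ge 0$ because a K3 surface carries no covering family of rational curves. The genuine gap is in the case you declare ``done directly'', namely when $F$ is \emph{not} contracted by $\varphi$. Restricting the pluricanonical system of $K_X+\Delta$ to $F$ gives the inequality in the wrong direction: since $(K_X+\Delta)|_F = K_F+\Delta_F$, restriction of sections yields $\kappa(K_F+\Delta_F)\ \ge\ \dim\varphi(F)$, not $\le$. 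To get the upper bound $\kappa(K_F+\Delta_F)\le\dim\varphi(F)$ you must apply Ueno's easy addition to $\varphi|_F : F\to\varphi(F)$, whose generic fiber is $G\cap F$ (with $G$ the generic fiber of $\varphi$), and then prove $\kappa\bigl(K_{G\cap F}+\Delta|_{G\cap F}\bigr)\le 0$. The only a priori input is $\kappa(K_G+\Delta_G)=0$, and Kodaira dimension can strictly increase under restriction to a subvariety (for instance $\cO(C)$ for a $(-2)$-curve $C$ on a K3 has $\kappa=0$ but positive degree on an ample curve), so this step is not formal.

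Since $G\cap F$ is precisely the generic fiber of $p|_G : G\to p(G)$, the needed bound is a subadditivity statement for $G\to p(G)$: when $p(G)=Y$ this is exactly the induction hypothesis (Theorem \ref{main} in dimension $\le n-1$, applicable because $\dim G\le n-1$ once $\kappa(K_X+\Delta)\ge 1$), when $p(G)$ is a curve it is Kawamata's $C_{n,1}$, and in both cases one also needs $\kappa(p(G))\ge 0$, which again uses that $Y$ is K3. So the surface hypothesis and the induction hypothesis, which you assert are not needed in this branch, are precisely what makes it work; this is the content of the paper's Cases 1 and 3, where the case division is by $\dim p(G)\in\{0,1,2\}$ rather than by whether $\varphi$ contracts $F$. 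With that correction your argument becomes essentially the paper's proof, just organized around the complementary dichotomy.
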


\begin{proof}
We use here the argument in \cite[Prop 3.7]{CP17}. Modulo desingularization, 
we can assume that the Iitaka fibration of $K_X+\Delta$ is a morphism between two projective manifolds
$\varphi: X\rightarrow W$. 
$$\xymatrix{
X\ar[rr]^-{\varphi} \ar[rd]_{p}
& & W \\
& Y }
$$
Let $G$ be the generic fiber of $\varphi$ and set $\Delta_G :=\Delta |_G$. 
Then 
\begin{equation}\label{genericfiber}
\kappa (K_G +\Delta_G)=0 . 
\end{equation}
Let $p: G \rightarrow p (G)$ be the restriction of $p$ on $G$. We will analyze next among three cases
which may occur. 

\medskip

\emph{Case 1: We assume that $p(G)$ projects onto $Y$}; then we argue as follows.
Let $\widetilde{p} : G \rightarrow \widetilde{Y}$ be the Stein factorization of $p : G \rightarrow Y$:
$$\xymatrix{
G\ar[rr]^-{p} \ar[rd]_{\widetilde{p}}
& & Y \\
& \widetilde{Y} \ar[ru]_s}
$$
After desingularization $\widetilde{p}$, we can assume that $\widetilde{Y}$ is smooth. Let $G_t$ be the generic fiber of $\widetilde{p}$.
By assumption, Theorem \ref{main} holds for $G\rightarrow \widetilde{Y}$.
Therefore \eqref{genericfiber} implies that 
\begin{equation}\label{kodairadim0}
 \kappa (K_{G_t} +\Delta_{G_t})=0 .
\end{equation}

We estimate next the dimension of $G$. Let $F$ be the generic fiber of $p : X\rightarrow Y$.
By restricting $\varphi$ on $F$, we obtain a morphism
$$\varphi_t: F \rightarrow V$$
where $V$ is a subvariety of $W$.
Let $\widetilde{V} \rightarrow V$ be the Stein factorization of $\varphi_t$.
$$\xymatrix{
F\ar[rr]^-{\varphi_t} \ar[rd]_{\widetilde{\varphi}_t}
& & V\\
& \widetilde{V} \ar[ru] }
$$
Since $G$ is generic, we infer that the generic fiber of $\widetilde{p}$ coincides with the generic fiber of $\widetilde{\varphi}_t$.
Combining this with \eqref{kodairadim0}, then \cite[Thm 5.11]{Uen75} implies that 
$$\kappa (K_F+\Delta_F) \leq \dim \widetilde{V} = \dim F -\dim G_t .$$
Therefore we have
$$\dim G_t \leq \dim F-\kappa (K_F+\Delta_F)$$
and thus we infer that
$$\dim G = \dim G_t +\dim \widetilde{Y} \leq \dim F -\kappa (K_F +\Delta_F) +\dim Y = \dim X -\kappa (K_F +\Delta_F) .$$

Finally, by construction of the Iitaka fibration, $\dim G = \dim X -\kappa (K_X+\Delta)$;
we obtain the inequality
$$\dim X -\kappa (K_X+\Delta) \leq \dim X -\kappa (K_F +\Delta_F),$$
and in conclusion $\kappa (K_X+\Delta) \geq \kappa (K_F +\Delta_F)$.

\medskip

\emph{Case 2: We assume that the image $p(G)$ has dimension zero.}
Since $G$ is connected, $p (G)$ is a point in $Y$.
This means that we can define a map $W\to Y$, which can be assumed to be regular by blowing up $W$. We have thus the commutative diagram
$$\xymatrix{
X\ar[rr]^-{\varphi} \ar[rd]_{p}
& & W\ar[ld]^q \\
& Y }
$$
Set $t:= p (G)$.
Let $F$ be the fiber of $p$ over $t$. Then $F$ is a generic fiber of $p$ and $G$ is a generic fiber of 
$$\varphi: F \rightarrow \varphi (F),$$
and by \cite[Thm 5.11]{Uen75} we infer that 
$$\kappa (K_F +\Delta_F) \leq \kappa (K_G +\Delta_G) + \dim \varphi (F) = \dim \varphi (F).$$
Note that $\varphi (F)$ is the fiber of $q$ over $t\in Y$.
We have $\dim W = \dim \varphi (F) + \dim Y$.
Therefore $\dim W \geq  \kappa (K_F +\Delta_F) +\dim Y$.
Combining this with the fact that $\varphi$ is the Iitaka fibration, we have thus 
$$\kappa (K_X +\Delta)=\dim W \geq \kappa (K_F +\Delta_F) +\dim Y ,$$
and we are done.

\medskip

\emph{Case 3: The remaining case: $p (G)$ is a proper subvariety of $Y$}.

Let $p (G) '$ be the normalization of $p (G)$. If $p (G) '$ is a curve of general type, 
then $\kappa (K_G +\Delta_G) \geq 1$ and we get a contradiction with \eqref{genericfiber}.
If $p (G) '$ is $\bP^1$, as $G$ is generic, $Y$ is thus covered by rational curves. 
We get a contradiction with the assumption that $Y$ is K3.
As a consequence, $p (G) '$ is a torus. Then $[p (G)]$ is a semi-ample class of numerical dimension $1$ in $Y$.
Therefore $p (G)$ is a generic fiber of a fibration $\pi: Y\rightarrow \bP^1$.
We have thus the following commutative diagram
$$\xymatrix{
X\ar[rr]^-{\varphi} \ar[rd]_{p\circ \pi}
& & W\ar[ld]^q \\
& \bP^1 }
$$

Set $t:= p\circ \pi (G)$.
Let $X_t$ be the fiber of $p\circ \pi$ over $t$. Then $G$ is the generic fiber of 
$$\varphi |_{X_t} : X_t \rightarrow \varphi (X_t),$$
and by \cite[Thm 5.11]{Uen75} we infer that 
$$\kappa (K_{X_t} +\Delta_{X_t}) \leq \kappa (K_G+\Delta_G) + \dim \varphi (X_t) = \dim \varphi (X_t).$$
Note that $\varphi (X_t)$ is the fiber of $q$ over $t\in \bP^1$.
We have $\dim W = \dim \varphi (X_t) + 1$.
Therefore $\dim W \geq  \kappa (K_{X_t} +\Delta_{X_t}) + 1$.
Combining this with the fact that $\varphi$ is the Iitaka fibration, we have thus 
$$\kappa (K_X +\Delta_X) =\dim W \geq \kappa (K_{X_t} +\Delta_{X_t}) + 1 \geq \kappa (K_F +\Delta_F)+1 ,$$
where the last inequality comes from the fact that $X_t$ is a fibration over a torus with the generic fiber $F$. 
The proposition is thus proved.
\end{proof}

\begin{proposition}\label{detailproof}
Let $p: X\rightarrow Y$ be a fibration between two projective manifolds. Let $F$ be the generic fiber and let $\Delta$ be a $\bQ$-effective klt divisor on $X$. 
Set $\Delta_F := \Delta |_F$.
If $\dim Y = 2$ and $\kappa (Y) \geq 1$, then 
\begin{equation}\label{iitakaineqapped}
\kappa (K_X +\Delta) \geq \kappa (K_F +\Delta_F) +\kappa (Y) . 
\end{equation}  
\end{proposition}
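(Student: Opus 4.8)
If $\kappa(K_F+\Delta_F)=-\infty$ there is nothing to prove, so assume $\kappa(K_F+\Delta_F)\ge 0$. As in the proof of the main theorem we replace $Y$ by its minimal model and assume $K_Y$ nef; since $\dim Y=2$, abundance holds and $K_Y$ is in fact semiample. We use throughout the tautology $K_X+\Delta\equiv_\bQ K_{X/Y}+\Delta+p^\star K_Y$. Since $\kappa(Y)\ge 1$, either $\kappa(Y)=2$ or $\kappa(Y)=1$, and we treat the two cases separately.

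\emph{Case $\kappa(Y)=2$.} Then $K_Y$ is big. For $m$ sufficiently divisible, Theorem \ref{maintool} equips $p_\star(mK_{X/Y}+m\Delta)$ with a positively curved singular hermitian metric, hence this sheaf is weakly positive on $Y$; tensoring with the big line bundle $mK_Y$ shows that $p_\star\big(m(K_X+\Delta)\big)=p_\star(mK_{X/Y}+m\Delta)\otimes mK_Y$ is a big sheaf on $Y$, so in particular $\det p_\star\big(m(K_X+\Delta)\big)$ is big. This is a twisted form of Case 1 in the proof of the main theorem, and one concludes $\kappa(K_X+\Delta)\ge\kappa(K_F+\Delta_F)+\dim Y=\kappa(K_F+\Delta_F)+\kappa(Y)$ by the base-of-general-type case of the Iitaka conjecture, cf.\ \cite{Vie83,Cam04b} and \cite[Thm 5.1]{CP17}.

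\emph{Case $\kappa(Y)=1$.} Then $K_Y$ is semiample of Iitaka dimension one, so the associated morphism is a fibration $\phi:Y\to B$ onto a smooth curve with $K_Y\equiv_\bQ\phi^\star L$ for some ample $\bQ$-divisor $L$ on $B$ of positive degree; put $\psi:=\phi\circ p:X\to B$. The general fibre $Y_b$ of $\phi$ has Kodaira dimension zero, hence is an elliptic curve, so the klt version of $C_{n,1}$ (\cite{Kaw82a,CP17}) applied to $p:X_b\to Y_b$ gives $\kappa(K_{X_b}+\Delta_{X_b})\ge\kappa(K_F+\Delta_F)$, where $X_b:=p^{-1}(Y_b)$ and $\Delta_{X_b}:=\Delta|_{X_b}$. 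If $g(B)\ge 2$, then $\kappa(B)=1=\kappa(Y)$ and $C_{n,1}$ applied to $\psi$ already gives $\kappa(K_X+\Delta)\ge\kappa(K_{X_b}+\Delta_{X_b})+\kappa(B)\ge\kappa(K_F+\Delta_F)+\kappa(Y)$. In general one argues as follows: from $K_X+\Delta\equiv_\bQ K_{X/Y}+\Delta+\psi^\star L$ one gets, for $m$ sufficiently divisible,
$$H^0\big(X,m(K_X+\Delta)\big)\;=\;H^0\big(B,\ \sG_m\otimes\cO_B(mL)\big),\qquad \sG_m:=\phi_\star\,p_\star(mK_{X/Y}+m\Delta).$$
The sheaf $\sG_m$ on the curve $B$ has generic rank $h^0\big(X_b,m(K_{X_b}+\Delta_{X_b})\big)$, which grows like $m^{\kappa(K_{X_b}+\Delta_{X_b})}\ge m^{\kappa(K_F+\Delta_F)}$ along a suitable arithmetic progression of $m$'s; granting that $\sG_m$ is nef on $B$, a Riemann--Roch estimate on $B$ (using $\deg L>0$) produces $\gg m^{\kappa(K_F+\Delta_F)+1}$ sections of $m(K_X+\Delta)$, whence $\kappa(K_X+\Delta)\ge\kappa(K_F+\Delta_F)+1=\kappa(K_F+\Delta_F)+\kappa(Y)$.

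The main obstacle is the nefness of $\sG_m$ on $B$ when $g(B)\le 1$ (for $g(B)\ge 2$ it is, as just seen, unnecessary). Weak positivity of $p_\star(mK_{X/Y}+m\Delta)$ on $Y$, provided by Theorem \ref{maintool}, does not formally descend to its $\phi$-pushforward; what is really needed is a lower bound on the minimal slope of $\psi_\star(mK_{X/B}+m\Delta)$ reflecting the positive degree of $L$ --- equivalently, the non-triviality of the elliptic fibration $\phi$ (variation of moduli and/or multiple fibres), which is precisely what distinguishes $\kappa(Y)=1$ from $\kappa(Y)=0$. I would establish this positivity by the Ohsawa--Takegoshi extension of Proposition \ref{specialOT}, applied with $q=\phi$, $Z=B$, $A_Z=L$ and $H=(m-1)K_Y$, in the spirit of Case 2 of the main theorem --- it produces directly many sections of $m(K_X+\Delta)+m\,p^\star K_Y$, and the extra twist is then removed at no cost since $p^\star K_Y$ is semiample of Iitaka dimension one --- or else by invoking Kawamata's semipositivity theorem for elliptic fibre spaces \cite{Kaw82a}. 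Everything else in the argument is a formal consequence of $C_{n,1}$ and of weak positivity of direct images.
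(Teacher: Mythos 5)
Your reduction to the minimal model, the treatment of $\kappa(Y)=2$, and the use of $C_{n,1}$ on the elliptic fibres $Y_b$ of the Iitaka fibration $\phi:Y\to B$ all match the paper's argument. But in the case $\kappa(Y)=1$ the step you yourself flag as ``the main obstacle'' is genuinely missing, and neither of the repairs you sketch closes it. The nefness of $\sG_m=\phi_\star p_\star(mK_{X/Y}+m\Delta)$ on $B$ does not follow from semipositivity of direct images, because $\sG_m\otimes\cO_B(mL)$ is not the pushforward of a relative pluricanonical bundle of $\psi=\phi\circ p$ (the discrepancy $K_{Y/B}$ is not a pull-back from $B$), so Kawamata's theorem does not apply to it. The application of Proposition \ref{specialOT} is likewise not automatic: its hypothesis is the $L^2$-condition \eqref{l2condit} with respect to the \emph{singular} metric $h$ of Theorem \ref{maintool}, and for a section of $p_\star(mK_{X/Y}+m\Delta)$ over a generic fibre $Y_z$ of $\phi$ this integrability can fail, since the singular locus of $h$ may dominate $B$ and hence meet every $Y_z$. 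Finally, even granting the extension, the twist removal is unjustified: sections of $m(K_X+\Delta)+m\,p^\star K_Y$ bound $\kappa(K_X+\Delta+p^\star K_Y)$ from below, not $\kappa(K_X+\Delta)$, and since $p^\star K_Y$ has Iitaka dimension exactly $1$ it could a priori account for the entire gain of $+1$ you are after.

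The paper closes this gap with a dichotomy on the curvature current $T=i\Theta_{\det h}\big(\det p_\star(mK_{X/Y}+m\Delta)\big)$ restricted to a generic fibre $Y_z$. If $T|_{Y_z}\equiv 0$, Proposition \ref{maintool2} shows that $h|_{Y_z}$ is smooth and hermitian flat, so the $L^2$-condition holds for every section over $Y_z$ and Proposition \ref{specialOT} (applied with $H=(m-1)K_Y$ and $A_Z=A$, so that the residual twist is a fixed multiple $c\,p^\star K_Y$ with $c$ independent of $m$) yields the desired sections. If $T|_{Y_z}\not\equiv 0$, then $\det p_\star(mK_{X/Y}+m\Delta)$ is ample on the curve $Y_z$; adding a small klt $\bQ$-divisor $\Delta'$ in the class of $c\,p^\star K_Y$ makes $\det p_\star(mK_{X/Y}+m\Delta+m\Delta')$ big on $Y$, one concludes by the big-determinant case \cite{Cam04b,CP17}, and the $\bQ$-effective class $(1-c)p^\star K_Y$ is subtracted at the end. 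Your write-up contains neither branch of this dichotomy, so the central case of the proposition remains unproved.
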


\begin{proof}
Since $Y$ is of dimension $2$, we can consider its minimal model and assume that $Y$ is smooth with semi-ample canonical bundle.

If $\kappa (Y)=2$, then $K_Y$ is big and it is known that \eqref{iitakaineqapped} holds.

If $\kappa (Y)=1$, we can suppose that $K_Y$ is semi-ample. Then $K_Y$ induces a fibration 
$$\pi: Y\rightarrow Z$$
to a $1$-dimensional variety $Z$ and $K_Y = \pi^\star A$ for some ample line bundle $A$ on $Z$.
Let $Y_z$ be a generic fiber of $\pi$. Then $Y_z$ is a $1$-torus.
Let $m\in\mathbb{N}$ be a number sufficiently large and let $h$ be the possibly singular hermitian metric on
$p_\star (m K_{X/Y} +m \Delta)$ defined in Theorem \ref{maintool}. There are two cases.

\medskip

{\em Case 1.} $ i\Theta_{\det h} (\det p_\star (m K_{X/Y} +m \Delta)) |_{Y_z} \equiv 0$. 
Thanks to Proposition \ref{maintool2}, the vector bundle 
$(p_\star (m K_{X/Y} +m \Delta)|_{Y_z} , h )$ is hermitian flat.
Therefore
\begin{equation}\label{l2cond}
\int_{Y_z} |s|_{h} ^2 < +\infty \qquad\text{for every }s\in H^0 (Y_z, p_\star (m K_{X/Y} +m \Delta)).
\end{equation}
As $K_Y =\pi^\star A$ for some ample line bundle on $Z$, Proposition \ref{specialOT} and the $L^2$-condition \eqref{l2cond} imply that
\begin{equation}\label{2ineq}
\kappa (X, K_X +\Delta) \geq \kappa (X_z, K_{X/Y} +\Delta |_{X_z}) +1.
\end{equation}
Moreover, by applying \cite{CP17, Kaw82a} to $X_z \rightarrow Y_z$ and the fact that $Y_z$ is a torus, we have
\begin{equation}
\kappa (X_z, K_{X/Y} +\Delta |_{X_z})  \geq \kappa (K_F +\Delta_F) .
\end{equation}
Together with \eqref{2ineq}, \eqref{iitakaineqapped} is proved.

\medskip

{\em Case 2.} $ i\Theta_{\det h} (\det p_\star (m K_{X/Y} +m \Delta)) |_{Y_z} \gneqq  0$. As $Y_z$ is of dimension $1$, 
$\det p_\star (m K_{X/Y} +m \Delta) |_{Y_z}$ is ample on $Y_z$. 
Since $K_Y$ is semi-ample, we can find some $\mathbb{Q}$-div $\Delta' \geq 0$ 
in the same class 
of $c\cdot p^\star K_Y$ for some $c >0$ small enough such that $\Delta +\Delta'$ is klt. 
Then 
$$\det p_\star (m K_{X/Y} +m \Delta +m \Delta') = \det p_\star (m K_{X/Y} +m \Delta) + m\Delta'$$ 
is big on $Y$.
By applying for example \cite{Cam04b, CP17}, we have
$$\kappa (K_{X/Y} + \Delta + \Delta') \geq \kappa (K_F +\Delta_F) +2 .$$
As $c<1$, we know that $K_X + \Delta - (K_{X/Y} + \Delta + \Delta') = (1-c) \cdot p^\star K_Y$ is $\mathbb{Q}$-effective.
Therefore
$$\kappa (K_X + \Delta) \geq \kappa (K_F +\Delta_F) +2 ,$$
and \eqref{iitakaineqapped} is proved.
\end{proof}

\end{document}